\def\resto#1#2{{
#1\hskip 0.4ex\vline_{\hskip 0.2ex\raisebox{-0,2ex}
{{${\scriptstyle #2}$}}}}}
\def\union{\mathop{\bigcup}}
\def\textmap#1{\mathop{\vbox{\ialign{
                                  ##\crcr
      ${\scriptstyle\hfil\;\;#1\;\;\hfil}$\crcr
      \noalign{\kern 1pt\nointerlineskip}
      \rightarrowfill\crcr}}\;}}
\def\bigtextmap#1{\mathop{\vbox{\ialign{
                                  ##\crcr
      ${\hfil\;\;#1\;\;\hfil}$\crcr
      \noalign{\kern 1pt\nointerlineskip}
      \rightarrowfill\crcr}}\;}}
\def\textlmap#1{\mathop{\vbox{\ialign{
                                  ##\crcr
      ${\scriptstyle\hfil\;\;#1\;\;\hfil}$\crcr
      \noalign{\kern-1pt\nointerlineskip}
      \leftarrowfill\crcr}}\;}}
\def\N{{\mathbb N}}
\def\R{{\mathbb R}}
\def\jg{{\mathfrak j}}
\def\kg{{\mathfrak k}}
\def\lg{{\mathfrak l}}
\def\pg{{\mathfrak p}}
\theoremstyle{remark}
\theoremstyle{plain}
\newtheorem{sz}{Satz}[section]
\newtheorem{thry}[sz]{Theorem}
\newtheorem{pr}[sz]{Proposition}
\newtheorem{co}[sz]{Corollary}
\newtheorem{dt}[sz]{Definition}
\newtheorem{lm}[sz]{Lemma}
\theoremstyle{remark}
\newtheorem{re}{Remark}
\theoremstyle{plain}
\def\End{\mathrm {End}}
\def\Aut{\mathrm {Aut}}
\def\U{\mathrm{U}}
\def\SU{\mathrm {SU}}
\def\PU{\mathrm {PU}}
\def\Iso{\mathrm {Iso}}
\def\Hom{\mathrm{Hom}}
\def\id{ \mathrm{id}}
\def\ad{\mathrm {ad}}
\def\U2{\mathrm{U(2)}}
\def\niq{=\kern-.18cm /\kern.08cm}
\def\ad{\mathrm{ad}}
\newcommand{\cal}{\mathcal}
\begin{document}

\title[Locally homogeneous triples]{Locally homogeneous triples. Extension theorems for parallel sections and parallel bundle isomorphisms}
\author{Arash Bazdar}
\address{Aix Marseille Université, CNRS, Centrale Marseille, I2M, UMR 7373, 13453 Marseille, France}
\email{arash.bazdar@univ-amu.fr}
\thanks{ I thank to my PhD advisor Andrei Teleman for suggesting me this interesting research topic, and for guiding my work on the subject. }

\begin{abstract}
Let $M$ be a differentiable manifold and $K$ a Lie group. A locally
homogeneous triple with structure group $K$ on $M$ is a triple $(g,
P\stackrel{p}{\to} M,A)$, where $p:P\to M$ is a principal $K$-bundle on $M$,
$g$ is Riemannian metric on $M$, and $A$ is connection on $P$ such that the
following locally homogeneity condition is satisfied: for every two points $x$,
$x'\in M$ there exists an isometry $\varphi:U\to U'$ between open neighbourhoods
$U\ni x$, $U'\ni x'$ with $\varphi(x)=x'$, and a $\varphi$-covering bundle
isomorphism $\Phi:P_U\to P_{U'}$ such that $\Phi^*(A_{U'})=A_U$. If
$(g,P\stackrel{p}{\to} M,A)$ is a locally homogeneous triple on $M$, one can
endow the total space $P$ with a locally homogeneous Riemannian metric such
that $p$ becomes a Riemannian submersion and $K$ acts by isometries. Therefore
the classification of locally homogeneous triples on a given manifold $M$ is an
important problem: it gives an interesting class of geometric manifolds which
are fibre bundles over $M$.

In this article we will prove a classification theorem for locally
homogeneous triples. We will use this result in a future article in order to
describe explicitly moduli spaces of locally homogeneous triples on Riemann
surfaces.	
\end{abstract}

\maketitle
{\ }\vspace{-5mm}\\
\textbf {Mathematics Subject Classification(2010)}: 53C05, 53C30\\
\textbf{Keywords}: geometric structures, locally homogeneous manifolds, connections
\section{Introduction}\label{introd}

\subsection{The problem} \label{TheProblem} 
Let $M$ be a smooth manifold and $K$ a Lie group. A locally homogeneous triple with structure group $K$ on $M$ is a triple $(g, P\textmap{p} M,A)$, where $p:P\to M$ is a principal $K$-bundle on $M$, $g$ is Riemannian metric on $M$, and $A$ is connection on $P$ such that the following  locally homogeneity condition is satisfied: for every two points $x$, $x'\in M$ there exists an isometry $\varphi:U\to U'$ between open neighbourhoods $U\ni x$, $U'\ni x'$ with $\varphi(x)=x'$,  and a $\varphi$-covering bundle isomorphism $\Phi:P_U\to P_{U'}$ such that $\Phi^*(A_{U'})=A_U$. In these formulae, for an open set $U\subset M$, we  use the subscript $_U$ to denote the restriction of the indicated objects to $U$.

If $(g,P\textmap{p} M,A)$ is a locally homogeneous triple on $M$, one can endow the total space $P$ with a locally homogeneous Riemannian metric such that $p$ becomes a Riemannian submersion and $K$ acts by isometries. Therefore the    classification of locally homogeneous triples on a given manifold $M$ is an important problem: it gives an interesting class of geometric manifolds which are fibre bundles  over $M$.

In this article we will prove  a classification theorem for locally homogeneous triples. We will use this result in a future article in order to describe explicitly the moduli spaces of locally homogeneous triples with structure group $K=\SU(2)$ and $K=\PU(2)$ on a Riemann surface \cite{BaTe}.

We use the following general natural idea: reduce the classification of a class of locally homogeneous objects on $M$ to the classification of a class of (globally) homogeneous  objects on the universal cover $\tilde M$.  Suppose for instance that $M$ is compact, and $g$ is a locally homogeneous  Riemannian metric on $M$. Then the induced metric $\tilde g$ on $\tilde M$ will be locally homogeneous and complete, hence homogeneous by a well-known theorem of Singer \cite{Si}.  Therefore, using the correspondence $g\mapsto\tilde g$, the classification of locally homogeneous  Riemannian metrics  $g$ on $M$ reduces to the classification of homogeneous metrics $\tilde g$ on $\tilde M$ for which the isometry group $\Iso(\tilde M,\tilde g)$  contains the covering transformation group of the universal cover $\tilde M\to M$.
We will prove a similar result for locally homogeneous triples.\\

\subsection{Locally homogeneous triples and homogeneous connections}
\label{LocHom-intro}

%

Let $(N,h)$ be a Riemannian manifold, and let $G\subset \Iso(N,h)$ be a connected, closed subgroup of the group $\Iso(N,h)$ of isometries of $(N,h)$. Let $q:Q\to N$ be a principal $K$-bundle on $N$. The group of $G$-covering bundle isomorphisms of $Q$ is defined by
$${\cal G}_G(Q):=\{(\Phi,\varphi)|\ \varphi\in G,\ \Phi:Q\to Q\hbox{ is a $\varphi$-covering bundle isomorphism}\}.$$  
The group ${\cal G}_G(Q)$ has a natural   topology (induced by the weak ${\cal C}^\infty$-topology \cite[section 2.1]{Hi}), and fits in the short exact sequence
$$\{1\}\to {\cal G}(Q)\to {\cal G}_G(Q)\textmap{\pg} G\to \{1\},
$$
where ${\cal G}(Q)$ is the gauge group of $Q$ \cite{DK}, \cite{Te}, i.e. the group of $\id$-covering bundle automorphisms of $Q$.  Let now $\Gamma\subset G$ be a subgroup of $G$ acting properly discontinuously on $N$. The quotient $M:=N/\Gamma$  comes with a natural Riemannian metric, such that the canonical projection $\pi:N\to M$ becomes a locally isometric covering  projection.   Denote by $g$ this Riemannian metric on $M$ induced by $h$ via $\pi$. 

Suppose  now the group epimorphism $\pg^{-1}(\Gamma)\to \Gamma$ has a right inverse, i.e. that  there exists a group morphism $\jg:\Gamma\to {\cal G}_G(Q)$ such that $\pg\circ \jg=\iota_\Gamma$, where $\iota_\Gamma:\Gamma\to G$ is the inclusion monomorphism. If this is the case, we will obtain the commutative diagram
\begin{equation}
\begin{diagram}[s=7mm]
\{1\}&\rTo  &{\cal G}(Q) &\rTo  &{\cal G}_G(Q) &\rTo^{\pg} & G&\rTo & \{1\}	\\
&&&&&\luTo_{\jg}&\uInto_{\iota_\Gamma}  && \\
&&&&&& \Gamma &&
\end{diagram}.
\end{equation}
The group $\Gamma$ acts (via $\jg$) on $Q$ by bundle isomorphisms, and the quotient $P:=Q/\Gamma$ will be a principal $K$-bundle on the quotient manifold $M$.\\

Let now $B$ be a connection on $Q$ satisfying the following invariance condition: \\
\\
{(C$_G$) {\it Any element $\varphi\in G$ has a lift in ${\cal G}_G(Q)$ which leaves $B$ invariant. }\\

In this case we obtain a short exact sequence
$$\{1\}\to {\cal G}^B(Q)\to {\cal G}_G^B(Q)\textmap{\pg_B} G\to \{1\},
$$
where  ${\cal G}^B(Q)$  (${\cal G}_G^B(Q)$) is the stabilizer of $B$ in the gauge group ${\cal G}(Q)$ (respectively in the group ${\cal G}_G(Q)$).
  If moreover we can find a lift $\jg:\Gamma \to {\cal G}_G^B(Q)$ of   $\iota_\Gamma: \Gamma\to G$, we will obtain the diagram
  \begin{equation}
\begin{diagram}[s=7mm]
\{1\}&\rTo  &{\cal G}^B(Q) &\rTo  &{\cal G}_G^B(Q) &\rTo^{\pg_B} & G&\rTo & \{1\}	\\
&&&&&\luTo_{\jg}&\uInto_{\iota_\Gamma}  && \\
&&&&&& \Gamma &&
\end{diagram},
\end{equation}
and the quotient bundle $p:P=Q/\Gamma\to N/\Gamma=M$ will comme with an induced connection, which will be denoted by $A$.  The triple $(g, P\textmap{p} M, A)$ will be called the $\Gamma$-quotient of $(h, Q\textmap{q} N,B)$   associated with the lift $\jg: \Gamma\to {\cal G}_G^B(Q)$ of $\iota_\Gamma$. \\

  Condition (C$_G$) has an important gauge theoretical interpretation (see \cite{BiTe} for details}): Let ${\cal B}(Q):={\cal A}(Q)/{\cal G}(Q)$  be the moduli space of all connections on $Q$, where ${\cal A}(Q)$ denotes the  space of connections on $Q$.  A connection $B'$ on a principal $K$-bundle $Q'\simeq Q$ yields a well defined  element    $[B']\in {\cal B}(Q)$ in the following way: we chose a bundle isomorphism $\psi: Q\to Q'$, and we put $[B]:={\cal G}(Q)\cdot \psi^*(B')$. This gauge class will be independent of $\psi$.  On the other hand, since $G$ is connected, we have $\varphi^*(Q)\simeq Q$ for any $\varphi\in G$. Therefore, for any $B\in {\cal A}(Q)$ and any $\varphi\in G$, the pull-back connection $\varphi^*(B)\in {\cal A}(\varphi^*(Q))$ defines a gauge class $\varphi^*[B]  \in {\cal B}(Q)$.   In other words, we obtain a well-defined action of $G$ on the moduli space ${\cal B}(Q)$.
\begin{re}\label{GInv} A connection $B\in {\cal A}(Q)$ satisfies condition ($C_G$) if and only if the gauge class $[B]\in {\cal B}(Q)$ is $G$-invariant.
\end{re}

\begin{re}  If $G$ acts transitively on $N$, then
 
 \begin{itemize}
 \item The pair $(Q,B)$ is homogeneous with respect to the Lie group ${\cal G}_G^B(Q)$. This Lie group is an extension of $G$ by the compact group ${\cal G}^B(Q)$ (which is isomorphic with a closed subgroup of $K$).  
  \item The quotient triple $(g, P\textmap{p} M,A)$ is locally homogeneous.
 \end{itemize}

\end{re} 

The goal of the article is to prove that, under certain (very general) conditions, all locally homogeneous triples can be obtained in this way. More precisely, any locally homogeneous triple on $M$  can be obtained as the quotient of a homogeneous triple on $\tilde M$. In a future article, we will show that, using this result, one obtains an explicit classification theorem  for  locally homogeneous triples with compact structure group $K$  on a given compact manifold $M$.

\newtheorem*{th-main}{Theorem \ref{main}}
\begin{th-main}
Let $M$ be a compact real analytic manifold, and $(g,P\textmap{p} M, A)$ be a real analytic locally homogeneous triple on $M$, where $P\textmap{p} M$ is a $K$-principal bundle with $K$ compact. Let $\pi:\tilde M\to M$ be the universal cover of $M$, $\Gamma$ be the corresponding covering transformation group,  $\tilde g:=\pi^*(g)$, $q:Q:=\pi^*(P)\to \tilde M$, and  $B:=\pi^*(A)$.   Then there exists  
\begin{enumerate}
\item A connected, closed subgroup $G\subset \Iso(\tilde M,\tilde g)$ acting transitively  on $\tilde M$ which leaves  invariant the gauge class $[B]\in {\cal B}(Q)$,
\item A lift $\jg:\Gamma\to {\cal G}_G^B(Q)$ of   $\iota_\Gamma:\Gamma\to G$.
\end{enumerate}

\end{th-main}

\begin{re} In the conditions and with notations of Theorem \ref{main} the natural map $Q\to P$ induces an isomorphism between the  $\Gamma$-quotient of the triple $(\tilde g,  Q\textmap{q} \tilde M, B)$ and  $(g,P\textmap{p} M, A)$. \end{re}
Therefore, using the notations introduced above, we have: 
\newtheorem*{main-coro-intro}{Corollary \ref{main-coro}} 
\begin{main-coro-intro} Let $M$ be a compact real analytic manifold, and $K$ be a compact Lie group.  
Then any real analytic locally homogeneous triple $(g,P\textmap{p} M, A)$ with structure group $K$ on $M$ can be   identified with a $\Gamma$-quotient of the  homogeneous triple $(\tilde g=\pi^*(g),   Q:=\pi^*(P)\textmap{q} \tilde M, B)$ on the universal cover $\tilde M$.	
\end{main-coro-intro}

\begin{re}
An important difficulty in the proof of Theorem \ref{main} is the closedness of the  stabilizer of a class $[B]\in {\cal B}(Q)$ in $\Iso(\tilde M,\tilde g)$. Indeed, since $\tilde M$ is not necessarily compact, the classical gauge theoretical result on the Hausdorff property \cite[p.  130]{DK} of a moduli space  ${\cal B}(Q)$ does not apply.
\end{re}

\begin{re} Corollary \ref{main-coro} shows that, in the condition of Theorem \ref{main}, the classification of real analytic locally homogeneous triples on $M$ reduces to the classification of pairs $(Q,B)$ on the universal cover $\tilde M$, which are homogeneous with respect to a Lie group  ${\cal G}$ fitting into a short exact sequence
$$\{1\}\to L\to {\cal G}\to G\to \{1\},
$$
where $G$ is connected group of isometries of $(\tilde M, \tilde g)$ acting transitively on $\tilde M$ and containing $\Gamma$, and $L$ is a closed subgroup of $K$. The classification of such pairs $(Q,B)$ can be studied using \cite{BiTe}. 
	
\end{re}

\def\an{\mathrm{an}}

\section{Extension of local parallel  sections}

\subsection{The space of parallel sections}
Let $p:P\to M$ be a real analytic principal $K$-bundle over a real analytic manifold $M$.  Let   $\lambda:K\times F\to F$ an analytic action of $K$ on an analytic manifold $F$, and let $E:=P\times_\lambda F$ be the associated bundle with fibre $F$.   Let ${\cal E}\textmap{\mu} M$ be  the projection map of the étale space  of the sheaf of (locally defined) analytic   sections of the bundle $p_E:E\to M$. In other words, a point of ${\cal E}$ is a germ $[s]_x$  where $x\in X$,  and $s\in \Gamma^\an(U,E)$ is an analytic section  defined on  an open neighbourhood $U$ of $x$ in $M$. Two pairs 
$$(s:U\to E,x), \ (s':U'\to E,x')$$
define the same germ (hence one has $[s]_x=[s']_x$)  if $x=x'$ and there exists $U_0\subset U\cap U'$ such that $\resto{s}{U_0}=\resto{s'}{U_0}$. The projection map $\mu$ is given by $[s]_x\mapsto x$. The space ${\cal E}$  has a natural structure of a real analytic  manifold, which is Hausdorff, but does not have countable basis. The topology of ${\cal E}$ can be easily described as follows: Any section $s\in  \Gamma^\an(U,E)$ defines a section $\tilde s:U\to {\cal E}$ of ${\cal E}\textmap{\mu} M$ given by
$$\tilde s (x')=[s:U\to E, x'].
$$
The sets  of the form $\tilde s(U)$ (where $U$ is open in $M$ and $s\in  \Gamma^\an(U,E)$) give a basis for the topology of ${\cal E}$. Note also that, for any open set $U\subset M$, the restriction  $\resto{\mu}{\tilde s(U)}:\tilde s(U)\to U$ is a real analytic diffeomorphism.

Let now   $A$ be a real analytic connection on $P$, and let   $\Gamma^A\subset T_E$ be the associated connection on the associated bundle $E$. Denote by ${\cal E}^A\subset {\cal E}$  the open submanifold of ${\cal E}$ whose points are  germs of $\Gamma^A$-parallel  sections, and by $\mu^A:\cal E^A\to M$ the restriction of $\mu$ to ${\cal E}^A$.  With these notations we state
\begin{thry}\label{CoveringTh}
Suppose that $M$ is connected.  If ${\cal E}^A$ is non-empty, then the map $\mu^A:\cal E^A\to M$   is a covering map.
\end{thry}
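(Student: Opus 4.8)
The plan is to prove that $\mu^A:{\cal E}^A\to M$ is a surjective local analytic diffeomorphism with the path lifting property, and then to conclude by the standard fact that an \'etale map of a Hausdorff space onto a connected manifold which is surjective and has the path lifting property is a covering map (the fibres of $\mu^A$ will be discrete but possibly uncountable, which is harmless). The \'etale part is formal: if $s\in\Gamma^\an(V,E)$ is $\Gamma^A$-parallel on an open set $V\subset M$, then $\tilde s(V)$ is open in ${\cal E}^A$ and $\mu|_{\tilde s(V)}:\tilde s(V)\to V$ is an analytic diffeomorphism; since such sets form a basis of ${\cal E}^A$, the map $\mu^A$ is a local analytic diffeomorphism, and in particular it has unique path lifting. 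It then remains to establish path lifting. Since every continuous path is homotopic rel endpoints to a piecewise-smooth one, and path lifting for piecewise-smooth paths already suffices to run the covering-space argument, I would only lift piecewise-smooth paths. Fix $[s_0]_{x_0}\in{\cal E}^A$ and a piecewise-smooth path $\gamma:[0,1]\to M$ with $\gamma(0)=x_0$, put $e_0:=s_0(x_0)$, and let $e_t:=P^t_\gamma(e_0)\in E_{\gamma(t)}$ be the parallel transport of $e_0$ along $\gamma|_{[0,t]}$ (defined for all $t$, since horizontal lifts of paths in the principal bundle $P$ exist for all parameter values). The lift of $\gamma$ we want is $t\mapsto[\sigma_t]_{\gamma(t)}$ for suitable local parallel sections $\sigma_t$ with $\sigma_t(\gamma(t))=e_t$, so everything reduces to the following assertion, which I regard as the heart of the matter and the point where real analyticity is essential: \emph{if $e_0\in E$ is the value of a local $\Gamma^A$-parallel section, then for every piecewise-smooth path $\gamma$ from $p_E(e_0)$ and every $t$, the point $e_t=P^t_\gamma(e_0)$ is again the value of a local $\Gamma^A$-parallel section $\sigma_t$, and $t\mapsto[\sigma_t]_{\gamma(t)}$ is continuous into ${\cal E}^A$.}

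To prove this I would pass to the accessibility orbit of the horizontal curves. Let ${\cal H}$ be the family of $\Gamma^A$-horizontal lifts to $E$ of analytic vector fields on $M$; these are real-analytic vector fields on the real-analytic manifold $E$, and they span the distribution $\Gamma^A$. Let ${\cal O}\subset E$ be the accessibility orbit of ${\cal H}$ through $e_0$, i.e.\ the set of points reachable from $e_0$ along horizontal curves. By the real-analytic orbit theorem (Nagano; cf.\ also Sussmann), ${\cal O}$ is an immersed analytic submanifold of $E$ whose tangent space at each point is the value there of the distribution ${\cal D}$ spanned by all iterated Lie brackets of fields in ${\cal H}$; in particular $\dim{\cal D}$ is constant along ${\cal O}$, equal to $\dim{\cal O}$. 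Now $e_0$ is the value of a parallel section $s_0$, so $\im(s_0)$ is an integral manifold of $\Gamma^A$ through $e_0$; every field of ${\cal H}$ is tangent to $\im(s_0)$ along it, hence so is every iterated bracket, and therefore $\Gamma^A_{e_0}\subseteq{\cal D}(e_0)=T_{e_0}{\cal O}\subseteq T_{e_0}\im(s_0)$. Since the two extreme spaces both have dimension $\dim M$, we get $\dim{\cal O}=\dim M$, and hence ${\cal D}=\Gamma^A$ along ${\cal O}$ (both have rank $\dim M$ there and ${\cal D}\supseteq\Gamma^A$), so that ${\cal O}$ is an integral manifold of the connection $\Gamma^A$. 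Since $\Gamma^A$ is a connection, ${\cal O}$ is transverse to the fibres of $p_E$, so $p_E|_{\cal O}:{\cal O}\to M$ is a local diffeomorphism and ${\cal O}$ is, near each of its points, the graph of a $\Gamma^A$-parallel local section. Finally $e_t$ lies on ${\cal O}$ (it is obtained from $e_0$ by a horizontal curve) and $t\mapsto e_t$ is a piecewise-smooth curve in ${\cal O}$; reading ${\cal O}$ locally as a graph produces the sections $\sigma_t$ together with the continuity of $t\mapsto[\sigma_t]_{\gamma(t)}$.

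This proves the assertion, hence path lifting; surjectivity of $\mu^A$ then follows because ${\cal E}^A\ne\emptyset$, $M$ is connected, and any point of $M$ is joined by a piecewise-smooth path to the base point of a germ in ${\cal E}^A$, which we lift. To conclude I would invoke the standard covering-space argument: an \'etale map with unique path lifting and the path lifting property satisfies the monodromy theorem, so each simply connected open $U\subset M$ is evenly covered (fixing $x\in U$, lifting the paths in $U$ from $x$ produces, through each point of $(\mu^A)^{-1}(x)$, a section of $\mu^A$ over $U$, and these sections are pairwise disjoint and exhaust $(\mu^A)^{-1}(U)$). Hence $\mu^A:{\cal E}^A\to M$ is a covering map; the only substantial point is the analyticity-based assertion of the second paragraph, where the real-analytic orbit theorem is used to rule out the "blowing up" of local parallel sections along a path.
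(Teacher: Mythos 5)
Your overall architecture (étale map $+$ path lifting $\Rightarrow$ covering) is legitimate, and your use of analyticity via the Nagano/Sussmann orbit theorem is in the right spirit; it is, however, a genuinely different route from the paper's, and it has one step that does not follow from what you invoke. You define ${\cal O}$ as ``the set of points reachable from $e_0$ along horizontal curves'' and then apply the real-analytic orbit theorem to it. But the orbit theorem describes the orbit of the \emph{family} ${\cal H}$, i.e.\ the set of points of the form $\phi^{t_k}_{X_k}\circ\cdots\circ\phi^{t_1}_{X_1}(e_0)$ with $X_i\in{\cal H}$; an arbitrary piecewise-smooth horizontal curve is not a concatenation of integral curves of finitely many fixed analytic fields, so the assertion ``$e_t$ lies on ${\cal O}$'' is exactly the point that needs proof, not a definition. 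The same issue appears in the claim that $t\mapsto e_t$ is continuous \emph{into} ${\cal O}$, whose leaf topology is finer than the subspace topology. The naive open-closed argument along $[0,1]$ fails at the closedness step because ${\cal O}$ need not be closed in $E$ --- which is precisely the ``blow-up'' phenomenon you set out to exclude. The gap is fillable (e.g.\ compare the Nagano orbit of ${\cal H}$ with the Sussmann orbit of \emph{all} smooth $\Gamma^A$-tangent fields, whose orbits are known to coincide with the accessibility classes of the distribution and to be initial submanifolds; your dimension count then forces the two orbits to agree), but as written this is a real missing argument, not a routine verification. Two smaller points: you should allow locally defined analytic vector fields in ${\cal H}$ (globally defined ones need not span $T_xM$ a priori), and the bracket computation $ {\cal D}(e_0)\subseteq T_{e_0}\im(s_0)$ is fine.

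For comparison, the paper avoids path lifting and monodromy altogether: Lemma \ref{s0} extends a parallel germ at the center $x_0$ of an analytic coordinate ball $U_0$ to all of $U_0$ by parallel transport along the radial curves $\gamma_u(t)=h^{-1}(th(u))$, gets analyticity of $u\mapsto s_0(u)$ from analytic dependence of ODE solutions on parameters, observes that $s_0$ agrees with the given parallel section near $x_0$, and concludes that $s_0$ is parallel on all of $U_0$ by the identity theorem for analytic objects (\cite[Lemma 2, p.~253]{KN}) applied to its covariant derivative. This immediately exhibits $(\mu^A)^{-1}(U_0)$ as a disjoint union of graphs $\tilde s_\sigma(U_0)$, so coordinate balls are evenly covered. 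Both proofs use analyticity at the same conceptual spot (propagating flatness from an open set), but the paper's radial-transport construction replaces your orbit-theoretic machinery with an elementary and self-contained argument; your approach, once the gap above is closed, buys a statement that is perhaps more conceptual (the accessibility class through the value of a parallel section is a global integral leaf of $\Gamma^A$), at the cost of invoking substantially heavier control theory.
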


\begin{proof}
We will prove that $\mu^A$ satisfies condition (1) in Lemma \ref{coverings}	below. Let $x_0\in M$, and let $h:U_0\to B(0,r)\subset\R^n$ be local chart of $M$ around $x_0$	such that $h(x_0)=0$. We will show that $\mu^A$ maps diffeomorphically any connected component of $(\mu^A)^{-1}(U_0)$ onto $U_0$. 
Using Lemma \ref{s0} below, we obtain, for every point 
$\sigma=[s]_{x_0}\in (\mu^A)^{-1}(x_0)$
  a $\Gamma^A$-parallel section $s_\sigma:U_0\to A$ defining the germ $\sigma$. We claim that the connected components of $(\mu^A)^{-1}(U_0)$ are precisely the sets $\tilde s_\sigma(U_0)$, which are obviously mapped diffeomorphically onto $U_0$ via $\mu^A$. We will first prove that\\ \\
{\it Claim:}   The family $(\tilde s_\sigma(U_0))_{\sigma\in (\mu^A)^{-1}(x_0)}$ is a partition of $(\mu^A)^{-1}(U_0)$.
\\ \\
Indeed, we obviously have  $\tilde s_\sigma(U_0)\subset (\mu^A)^{-1}(U_0)$ for any $\sigma\in (\mu^A)^{-1}(x_0)$, hence 
$$\union_{\sigma\in (\mu^A)^{-1}(x_0)} \tilde s_\sigma(U_0)\subset (\mu^A)^{-1}(U_0).$$
The opposite inclusion is obtained as follows: let $\nu\in (\mu^A)^{-1}(U_0)$, therefore there exists $x\in U_0$ and a $\Gamma^A$-parallel section $s$ defined around $x$ such that $\nu=[s]_x$. Note now that the pair $(x,U_0)$ also satisfies the assumption of Lemma \ref{s0}, because there obviously exists a chart $h_x:U_0\to B(x,r)$ such that $h_x(x)=0$. Therefore the germ $\nu=[s]_x$ extends to a  $\Gamma^A$-parallel section $s_\nu:U_0\to E$. Note  that $[s_\nu]_{x_0}\in (\mu^A)^{-1}(x_0)$, and that the sections $s_\nu$, $s_{[s_\nu]_{x_0}}$ coincide, because they are both parallel and their germ at $x_0$ coincide. This proves that $\nu\in \tilde s_{[s_\nu]_{x_0}}(U_0)$, which proves the inclusion 
$$(\mu^A)^{-1}(U_0)\subset \union_{\sigma\in (\mu^A)^{-1}(x_0)} \tilde s_\sigma(U_0).$$
In order to complete the proof of the claim, it remains to note that the sets $\tilde s_\sigma(U_0)$ are pairwise disjoint. This follows using unique continuation for parallel sections.\\ 

Note now that any set $\tilde s_\sigma(U_0)$ is open in $(\mu^A)^{-1}(U_0)$. Using the claim we see that any such set is also closed in $(\mu^A)^{-1}(U_0)$ (as complement of an open set). Since any such set is obviously connected, the theorem follows by Lemma \ref{coverings}.

\end{proof}

\begin{lm}\label{coverings}
Let $M$, $N$ be differentiable manifolds, and $f:N\to M$ be	a locally diffeomorphic map  satisfying the following properties: %
\begin{enumerate}
\item 	any point $x_0\in M$ has an open neighborhood $U_0$ such that, for any connected component $\tilde U_0$ of $f^{-1}(U_0)$, the map $\tilde U_0 \to U_0$ induced by $f$ is a diffeomorphism,
\item $N$ is non-empty and $M$ is connected.
\end{enumerate}
Then $f$ is a covering projection.
 \end{lm}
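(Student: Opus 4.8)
The plan is to prove Lemma \ref{coverings} by the standard strategy for recognising covering maps: first establish that $f$ is surjective, then use hypothesis (1) to build the required evenly-covered neighbourhoods. The main work is to upgrade the ``each component maps diffeomorphically'' property to a genuine local triviality of $f$ over the neighbourhood $U_0$, and to deal with the subtlety that a priori $f^{-1}(U_0)$ could have components meeting $f^{-1}(x_0)$ in several points or none.

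\medskip

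\noindent\textbf{Step 1: $f$ is surjective.} Since $f$ is locally diffeomorphic, its image is open in $M$. I claim the image is also closed, hence all of $M$ by connectedness and non-emptiness of $N$. Indeed, let $x_0\in\overline{f(N)}$ and take the neighbourhood $U_0$ from hypothesis (1). Shrinking $U_0$ if necessary we may assume $U_0$ is connected. Pick $y\in f(N)\cap U_0$ and a preimage $p\in f^{-1}(y)$; the connected component $\tilde U_0$ of $f^{-1}(U_0)$ containing $p$ maps diffeomorphically onto $U_0$ by (1), so in particular $x_0\in U_0=f(\tilde U_0)\subset f(N)$. Thus $f(N)$ is open, closed and non-empty, so $f(N)=M$.

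\medskip

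\noindent\textbf{Step 2: the neighbourhood $U_0$ is evenly covered.} Fix $x_0\in M$ and let $U_0$ be as in (1); we may take $U_0$ connected. Let $\{\tilde U_\alpha\}_{\alpha\in I}$ be the connected components of $f^{-1}(U_0)$. By (1) each restriction $f|_{\tilde U_\alpha}:\tilde U_\alpha\to U_0$ is a diffeomorphism; in particular each $\tilde U_\alpha$ is open in $f^{-1}(U_0)$, hence open in $N$ (since $f$ is continuous and $U_0$ is open), and $f^{-1}(U_0)=\bigsqcup_{\alpha\in I}\tilde U_\alpha$ is a disjoint union of open sets each mapped diffeomorphically onto $U_0$. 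This is precisely the statement that $U_0$ is evenly covered by $f$. Since $x_0$ was arbitrary, $f$ is a covering projection. (Note $I$ is non-empty for each $x_0$ because $f$ is surjective by Step~1, so $f^{-1}(x_0)\neq\emptyset$; and the fibres are discrete since $f$ is locally diffeomorphic, so this is consistent with the usual definition.)

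\medskip

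\noindent\textbf{Remark on the main obstacle.} The only genuinely non-formal point is Step~1 (surjectivity): hypothesis (1) by itself says nothing unless $f^{-1}(U_0)$ is non-empty, and without connectedness of $M$ one could have $f$ a diffeomorphism onto one component of a disconnected $M$. The openness-and-closedness argument, together with connectedness of $M$ and non-emptiness of $N$, is exactly what rules this out. Step~2 is then a routine unwinding of definitions: the content of hypothesis (1) is tailored so that the components of $f^{-1}(U_0)$ directly furnish the sheets of the covering, with no further matching argument needed. I would spend essentially all the exposition on Step~1 and state Step~2 tersely.
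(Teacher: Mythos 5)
Your proof is correct and matches the paper's (implicit) argument: the paper gives no formal proof of Lemma \ref{coverings}, remarking only that condition (1) is the local-triviality clause of the definition of a covering and that surjectivity is the one point needing justification, which follows from (1) together with $N\neq\emptyset$ and connectedness of $M$ --- precisely the open-and-closed argument of your Step~1. Your Step~2 is the routine unwinding the paper takes for granted, so nothing further is needed.
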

 
Usually in the definition of a covering projection, condition (1) and the surjectivity of $f$ are required.  In our case, since $N$ is non-empty and $M$ is connected, the first condition implies the surjectivity of $f$.

\begin{lm}\label{s0}
Let $x_0\in M$, let $[s]_{x_0}\in (\mu^A)^{-1}(x_0)$, where $s:U\to E$ is $\Gamma^A$-parallel, and let $h:U_0\to B(0,r)\subset\R^n$ be an analytic local chart of $M$ around $x_0$	such that $h(x_0)=0$. Then $[s]_x$ extends to a $\Gamma^A$-parallel section $s_0\in\Gamma^{\an}(U_0,E)$, i.e. one has $e=[s_0]_{x_0}$,  for a  $\Gamma^A$-parallel section $s_0\in\Gamma^{\an}(U_0,E)$.
\end{lm}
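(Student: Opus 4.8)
The plan is to produce the extension by \emph{radial parallel transport} in the given chart $h$, and then to promote the resulting section from ``parallel along the radial rays of the chart'' to ``$\Gamma^A$-parallel'' by exploiting the real analyticity of all the data. This is the natural strategy because an \emph{arbitrary} analytic section germ need not extend analytically to a prescribed ball, whereas parallel transport, being globally defined along paths, \emph{does} furnish a section on all of $U_0$.

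Concretely, for $y\in B(0,r)$ I would consider the radial path $\gamma_y:[0,1]\to U_0$, $\gamma_y(t):=h^{-1}(ty)$, which joins $x_0$ to $h^{-1}(y)$ inside $U_0$. Parallel transport in the principal bundle $P$ — hence in the associated bundle $E$ — is defined along every piecewise $\mathcal C^1$ path (the horizontal-lift equation is globally solvable over a compact path, being, in a finite chain of local trivializations covering $\gamma_y([0,1])$, an ODE of the usual linear type in the structure group). So one obtains a well-defined section $\hat s:U_0\to E$ by declaring $\hat s(h^{-1}(y))$ to be the $\Gamma^A$-parallel transport of $s(x_0)$ along $\gamma_y$. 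Since $A$ (hence $\Gamma^A$) and $h$ are real analytic, the time-$1$ transport depends analytically on the parameter $y$ and on the initial datum (analytic dependence of solutions of an analytic ODE), so $\hat s\in\Gamma^{\an}(U_0,E)$.

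Next I would check that $\hat s$ represents the prescribed germ: pick $\varepsilon>0$ with $B(0,\varepsilon)\subset h(U\cap U_0)$ and set $V:=h^{-1}(B(0,\varepsilon))$. For $x\in V$ the ray $\gamma_{h(x)}$ stays inside $V\subset U$, where $s$ is $\Gamma^A$-parallel, so by uniqueness of parallel transport $\hat s(x)=s(x)$ on $V$; in particular $[\hat s]_{x_0}=[s]_{x_0}$. Finally, to see that $\hat s$ is $\Gamma^A$-parallel on all of $U_0$, write $V_E:=\ker(dp_E)$ for the vertical subbundle, so that $T_E=\Gamma^A\oplus V_E$, let $\mathrm{pr}^v:T_E\to V_E$ be the projection along $\Gamma^A$, and form $\delta:=\mathrm{pr}^v\circ d\hat s$, a real analytic section of the real analytic vector bundle $\Hom(TU_0,\hat s^*V_E)$ over $U_0$; by construction $\hat s$ is $\Gamma^A$-parallel exactly at the points where $\delta$ vanishes. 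By the previous step $\delta$ vanishes on the nonempty open set $V$, and $U_0\cong B(0,r)$ is connected, so the identity principle for real analytic sections (the locus where $\delta$ vanishes to infinite order is open, closed and nonempty) forces $\delta\equiv 0$ on $U_0$. Then $s_0:=\hat s$ has all the required properties.

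The only non-formal ingredient, and the step I expect to be the crux, is this last one: for a merely smooth connection the radially transported section is in general not parallel, and — because of holonomy — no parallel extension of the germ need exist at all. It is exactly the combination of real analyticity with the \emph{a priori} existence of the local parallel section $s$ that allows the identity principle to convert ``$\hat s$ parallel near $x_0$'' into ``$\hat s$ parallel on $U_0$''. The two auxiliary facts I would invoke without further comment are the global solvability of the parallel-transport ODE along a path and its analytic dependence on parameters and initial conditions.
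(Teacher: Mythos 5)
Your argument is correct and follows essentially the same route as the paper: construct the extension by parallel transport along the radial paths $t\mapsto h^{-1}(t\,h(u))$ of the chart, obtain analyticity from analytic dependence of the horizontal-lift ODE on parameters, identify the germ on a small ball $h^{-1}(B(0,\varepsilon))$ by uniqueness of horizontal lifts, and then upgrade ``parallel near $x_0$'' to ``parallel on $U_0$'' by real-analytic unique continuation. The only difference is that for this last step the paper simply cites \cite[Lemma 2, p.~253]{KN}, whereas you prove the required identity principle directly via the vanishing locus of the analytic section $\mathrm{pr}^v\circ d\hat s$ of $\Hom(TU_0,\hat s^*V_E)$, which is a perfectly adequate substitute.
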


\begin{proof}
Let $s:U\to E$    be a $\Gamma^A$-parallel section defining the germ $e$, where $U$ is an open neighborhood of $x_0$ in $M$. Let $\varepsilon\in (0,r)$ be sufficiently small such that $U_\varepsilon:=h^{-1}(B(0,\varepsilon))\subset U$. We will construct a real analytic section  $s_0\in\Gamma^{\an}(U_0,E)$ whose restriction to $U_{\varepsilon}$ coincides with $\resto{s}{U_{\varepsilon}}$.  Using \cite[Lemma 2, p. 253]{KN} it  follows  that $s_0$ is $\Gamma^A$-parallel, which completes the proof.

The construction of $s_0$ uses parallel transport along ``radial" curves in $U_0$. More precisely, for any  $u\in U_0$  define the path $\gamma_u:[0,1]\to U_0$ by
$$\gamma_u(t):=h^{-1}(t h(u)), $$
and note that $\gamma_u$ is a path in $U_0$ joining $x_0$ to $u$.	Put $e_0:=s(x_0)\in E_{x_0}$. For any $u\in U_0$ let $\tilde\gamma_u:[0,1]\to E$ be the $\Gamma^A$-horizontal lift of $\gamma_u:[0,1]\to M$ with initial condition $\tilde\gamma_u(0)=e_0$ (see section \ref{ParTran}). Therefore, $\tilde\gamma_u(0)=e_0$, and for every $t\in[0,1]$, one has 
$$\frac{d}{dt}\tilde\gamma_u(t)\in  \Gamma^A_{\tilde\gamma_u(t)}, \ p_E\circ\tilde\gamma_u(t)=\gamma_u(t). $$

Using the analycity of the map $(t,u)\mapsto \tilde\gamma_u(t)$, and a standard theorem on the analycity  of  solutions of  ordinary differential equations with respect to parameters, we see that the map  $u\mapsto s_0(u):= \tilde\gamma_u(1)$ is analytic. On the other hand one has 
$$(p_E\circ s_0)(u)=p_E(\tilde\gamma_u(1))=\gamma_u(1)=u.$$
It remains to prove that $\resto{s_0}{U_\varepsilon}=\resto{s}{U_\varepsilon}$. For this, it suffices to note that for any $u\in U_\varepsilon$ the paths $s_0\circ \gamma_u$, $s \circ \gamma_u$ are both $\Gamma^A$-horizontal lifts of $\gamma_u$ with the same initial condition $e_0$.

\end{proof}

 \subsection{The extension theorem for local parallel sections}

In this section we prove an extension theorem for locally defined parallel sections in analytic associated bundles. This result can be obtained using analytic continuation along paths. This method is used for instance in \cite{KN} for the problem  of extending affine mappings and isometric immersions. We will use a different method, which is base on Theorem \ref{CoveringTh} proved in the previous section.

\begin{thry}\label{ExtSections}
Let $p:P\to M$ be a real analytic principal $K$-bundle over a real analytic manifold $M$, $\lambda:K\times F\to F$ a real  analytic action of $K$ on a real analytic manifold $F$, and $E:=P\times_\lambda F$ the associated bundle with fibre $F$. Let $A$ be an analytic connection on $P$ and  $\Gamma^A\subset T_E$ the associated connection on $E$.

If $M$ is simply connected, then any $\Gamma^A$-parallel section $s:U\to E$ defined on a connected, non-empty open set $U\subset M$ admits a unique  $\Gamma^A$-parallel extension on $M$.\end{thry}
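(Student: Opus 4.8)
The plan is to obtain the extension directly from Theorem~\ref{CoveringTh} together with elementary covering space theory. Since $M$ is simply connected it is in particular connected, and since the given parallel section $s:U\to E$ furnishes a germ $[s]_x\in{\cal E}^A$ for every $x\in U$, the space ${\cal E}^A$ is non-empty; hence by Theorem~\ref{CoveringTh} the map $\mu^A:{\cal E}^A\to M$ is a covering projection. Because $M$ is simply connected (and, being a manifold, locally path connected), the identity map $\id_M$ lifts through this covering: fixing $x\in U$, there is a unique continuous map $\rho:M\to{\cal E}^A$ with $\mu^A\circ\rho=\id_M$ and $\rho(x)=[s]_x$. As $\mu^A$ is a local real analytic diffeomorphism, $\rho$ is real analytic; moreover, restricting to the connected open set $U$, both $\rho|_U$ and the canonical section $\tilde s:U\to{\cal E}^A$, $\tilde s(y)=[s]_y$, are lifts of the inclusion $U\hookrightarrow M$ through $\mu^A$ which agree at $x$, so by uniqueness of lifts $\rho(y)=[s]_y$ for all $y\in U$.

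Next I would convert $\rho$ into a section of $E\to M$ by evaluating germs. For $y\in M$ write $\mathrm{ev}_y:(\mu^A)^{-1}(y)\to E_y$ for the map sending a germ to its value at $y$, and define $\hat s(y):=\mathrm{ev}_y(\rho(y))$. Since $\mathrm{ev}_y$ takes values in $E_y$ we get $p_E\circ\hat s=\id_M$. To check that $\hat s\in\Gamma^{\an}(M,E)$ and that $\hat s$ is $\Gamma^A$-parallel, it suffices to work near an arbitrary $y_0\in M$: by the description of the topology of ${\cal E}$, the point $\rho(y_0)$ has a basic neighbourhood $\tilde t(W)$ with $W$ an open neighbourhood of $y_0$ and $t\in\Gamma^{\an}(W,E)$ a $\Gamma^A$-parallel section; by continuity $\rho(W')\subset\tilde t(W)$ for some smaller neighbourhood $W'$ of $y_0$, and then, $\mu^A$ being injective on $\tilde t(W)$, one gets $\rho(y)=\tilde t(y)=[t]_y$ for $y\in W'$, whence $\hat s|_{W'}=t|_{W'}$. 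Thus $\hat s$ is, near every point, an analytic $\Gamma^A$-parallel section, hence is such a section on all of $M$. Finally $\hat s$ extends $s$: for $y\in U$ we have $\rho(y)=[s]_y$ by the first step, so $\hat s(y)=s(y)$.

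For uniqueness, suppose $\hat s_1,\hat s_2$ are two $\Gamma^A$-parallel extensions of $s$ to $M$. Then $y\mapsto[\hat s_i]_y$ are two continuous sections of the covering $\mu^A$ over the connected manifold $M$ which coincide on the non-empty open set $U$; by uniqueness of lifts into a covering space they coincide on all of $M$, so $\hat s_1=\hat s_2$. (Equivalently, one may invoke the unique continuation property of parallel sections already used in the proof of Theorem~\ref{CoveringTh}.)

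I expect no real obstacle here, since the only analytic input is already packaged in Theorem~\ref{CoveringTh}. The one point worth a remark is that ${\cal E}^A$ need not be second countable; this is harmless, because the covering-space facts invoked — existence and uniqueness of lifts of maps out of a simply connected, locally path connected space — hold for arbitrary covering maps regardless of countability axioms. The essential use of the hypothesis is that simple connectedness of $M$ is precisely what allows $\id_M$ to be lifted through $\mu^A$; without it one is left only with the covering $\mu^A$ of Theorem~\ref{CoveringTh}, i.e. with extension of parallel sections along paths, possibly with monodromy.
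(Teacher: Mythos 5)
Your argument is correct and is essentially the paper's proof of Theorem \ref{ExtSections}: both rest entirely on Theorem \ref{CoveringTh} plus standard covering-space theory over a simply connected base, your lift $\rho$ of $\id_M$ being exactly the inverse of the restriction of $\mu^A$ to the connected component of ${\cal E}^A$ containing $\tilde s(U)$, which is the map the paper uses. Your extra care in converting the section of ${\cal E}^A$ into a parallel analytic section of $E$ and in spelling out uniqueness is just added detail, not a different route.
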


\begin{proof}
 The $\Gamma^A$-parallel section $s$ defines    a section $\tilde s\in\Gamma(U,{\cal E}^A)$. Let ${\cal C}\subset {\cal E}^A$ be  connected component of ${\cal E}^A$ which contains the image  $\tilde s(U)$ of $\tilde s$. By Theorem \ref{CoveringTh} we know that $\mu^A:{\cal E}^A\to M$ is a covering projection, hence  the restriction $\resto{\mu^A}{\cal C}: {\cal C}\to M$ of $\mu^A$ to ${\cal C}$ will also have this property. 
  
But $M$  is simply connected, hence  this  restriction   will be an analytic diffeomorphism.  The inverse map  $\left(\resto{\mu^A}{\cal C}\right)^{-1}:M\to \cal C\subset {\cal E}^A$ will define a parallel extension of $s$. \end{proof}

\section{Extension of  bundle isomorphisms} \label{ExtSect}

We will prove first an extension theorem for locally defined, $\id$-covering bundle isomorphisms intertwining two global connections:

\begin{thry}\label{ExtIso}
Let $p:P\to M$, $p':P'\to M$  be  real analytic principal $K$-bundles over a real analytic manifold $M$, and $A$, $A'$ be   analytic connections on $P$, $P'$ respectively. 
Let $U\subset M$ be a nonempty, connected open set, and $\Phi: P_U\to P'_U$ be an $\id_U$-covering analytic isomorphism such that $\Phi^*(A'_U)=A_U$. 

If $M$ is simply connected, then $\Phi$ has a unique $\id$-covering analytic extension $\tilde \Phi:P\to P'$ such that $\Phi^*(A')=A$.
	\end{thry}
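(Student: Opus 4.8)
The plan is to reduce Theorem \ref{ExtIso} to the extension theorem for parallel sections, Theorem \ref{ExtSections}, by packaging the two bundles and the two connections into a single analytic associated bundle. Set $Q:=P\times_M P'$, the fibre product, which is a real analytic principal $(K\times K)$-bundle over $M$; equip it with the product connection $A\times A'$, which is again real analytic. Let $K\times K$ act on the analytic manifold $K$ by $\lambda\big((a,b),k\big):=bka^{-1}$; this is a real analytic left action, so the construction of Section \ref{ExtSect}'s preceding section produces a real analytic associated bundle $E:=Q\times_\lambda K$ carrying an induced connection $\Gamma^{A\times A'}\subset T_E$.

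Next I would set up the dictionary between isomorphisms and sections. For any open $V\subset M$, an $\id_V$-covering analytic bundle isomorphism $\Phi:P_V\to P'_V$ is encoded, in a local trivialization given by analytic sections $\sigma$ of $P$ and $\sigma'$ of $P'$ over $V$, by the analytic map $f_\Phi:V\to K$ with $\Phi(\sigma(x))=\sigma'(x)\cdot f_\Phi(x)$. If the pair of local sections is changed by $(a,b):V\to K\times K$, i.e.\ $\sigma\mapsto\sigma a$, $\sigma'\mapsto\sigma' b$, then $K$-equivariance of $\Phi$ gives $f_\Phi\mapsto b^{-1}f_\Phi a$, which is exactly the transition rule for local representatives of sections of $E$ for the action $\lambda$ chosen above. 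Hence one obtains a bijection $\Phi\leftrightarrow s_\Phi$ between $\id_V$-covering analytic isomorphisms $P_V\to P'_V$ and analytic sections of $E$ over $V$, which is real analytic and compatible with restriction to smaller open sets; in particular global sections of $E$ correspond to global $\id$-covering analytic isomorphisms $P\to P'$.

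The crucial point, and the step I expect to be the main obstacle, is the compatibility of the two structures: $\Phi^*(A'_V)=A_V$ if and only if $s_\Phi$ is $\Gamma^{A\times A'}$-parallel. The cleanest way to see this is through parallel transport. The connection $\Gamma^{A\times A'}$ on $E$ is characterized by the property that parallel transport along a curve $\gamma$ in $M$ sends a fibre isomorphism $\phi\in E_{\gamma(0)}=\mathrm{Iso}_K(P_{\gamma(0)},P'_{\gamma(0)})$ to the isomorphism $p\mapsto \mathrm{PT}^{A'}_\gamma\big(\phi\big((\mathrm{PT}^A_\gamma)^{-1}(p)\big)\big)$ in $E_{\gamma(1)}$. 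Therefore $s_\Phi$ is $\Gamma^{A\times A'}$-parallel along $\gamma$ precisely when $\Phi$ conjugates $A$-parallel transport into $A'$-parallel transport along $\gamma$; since this holds for every curve $\gamma$ exactly when $\Phi$ maps $A$-horizontal vectors to $A'$-horizontal vectors, i.e.\ when $\Phi^*(A')=A$, the equivalence follows. (Alternatively, one can verify the same equivalence in a local trivialization by writing out the connection form induced on $E$ and comparing it with the pulled-back connection form $\Phi^*A'$.)

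With the dictionary and the parallelism criterion in hand, the theorem follows at once. The hypothesis $\Phi^*(A'_U)=A_U$ says that $s_\Phi\in\Gamma^{\mathrm{an}}(U,E)$ is $\Gamma^{A\times A'}$-parallel on the connected, non-empty open set $U$; since $M$ is simply connected, Theorem \ref{ExtSections} provides a unique $\Gamma^{A\times A'}$-parallel analytic section $\hat s$ of $E$ over $M$ with $\hat s|_U=s_\Phi$. The $\id$-covering analytic isomorphism $\tilde\Phi:P\to P'$ corresponding to $\hat s$ under the dictionary is then analytic, restricts to $\Phi$ on $U$, and satisfies $\tilde\Phi^*(A')=A$ by the parallelism criterion applied over $M$; its uniqueness is inherited from the uniqueness of $\hat s$ (equivalently, from unique continuation of parallel sections, since any two such extensions agree on $U$).
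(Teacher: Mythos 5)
Your proposal is correct and follows essentially the same route as the paper: both encode an $\id$-covering isomorphism as a section of the associated bundle $(P\times_M P')\times_\tau K$ with $\tau((k_1,k_2),k)=k_2kk_1^{-1}$, identify the condition $\Phi^*(A')=A$ with $\Gamma^{A\times A'}$-parallelism of that section, and then invoke Theorem \ref{ExtSections}. The only cosmetic difference is that you justify the parallelism criterion via parallel transport, whereas the paper's Proposition \ref{I(P,P')} does the corresponding infinitesimal computation with the equivariant map $\sigma^\Phi$.
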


\begin{proof}
For a point $x\in M$ let  $I(P,P')_x$  be the set of all isomorphisms $\psi:P_x\to P'_x$ of  right $K$-spaces. Fixing a pair $(y,y')\in P_x\times P'_x$, we obtain a diffeomorphism   $I(P,P')_x\simeq K$. The union
 $$I(P,P'):=\union_{x\in M} I(P,P')_x$$
 has a natural manifold structure, and the obvious projection $I(P,P')\to M$ is a locally trivial fibre bundle   over $M$ with fiber $K$. More precisely one can identify $I(P,P')$ (as a bundle over $M$) with the associated bundle 
 $$(P\times_M P')\times_\tau K,
 $$
 where $P\times_M P'$ is regarded as a principal $(K\times K)$-bundle over $M$, and 
 $$\tau: (K\times K)\times K\to K$$
  is the action  of $K\times K$ on $K$ defined by 
 $$\tau((k_1,k_2),k):=k_2kk_1^{-1}.$$
 The pair of connections $(A,A')$ defines a connection $A\times A'$ on the product principal bundle $P\times_M P'$.  For a pair $(y,y')\in P\times_M  P'$ the horizontal space $(A\times A')_{(y,y')}$ is just
 $$\{(v,v')\in A_y\times A'_{y'}|\ p_*(v)=p'_*(v')\}.
 $$
  The data of an $\id_U$-covering  morphism $\Phi:P_U\to P'_U$ is equivalent to the data of a section $s^\Phi\in\Gamma(U,I(P,P'))$ (see Proposition \ref{I(P,P')} in the Appendix).  Moreover, by the same proposition, one can prove that $\Phi^*(A')=A$ if and only if the section  $s^\Phi$ is $\Gamma^{A\times A'}$-parallel. With this remark the theorem follows from Theorem \ref{ExtSections}.
\end{proof}

We can treat now the general case of a  locally defined bundle  morphism covering  a globally defined map between the base manifolds: 

\begin{thry}\label{ExtIso-phi}
Let $p:P\to M$, $p':P'\to M'$  be  real analytic principal $K$-bundles, and $A$, $A'$ be   analytic connections on $P$, $P'$ respectively. Let $\varphi:M\to M'$ be a real analytic map,  
 $U\subset M$ be a non-empty, connected open set, and $\Phi: P_U\to P'$ be a  $\varphi$-covering analytic  morphism such that $\Phi^*(A')=A_U$. 

If $M$ is simply connected, then $\Phi$ has a unique $\varphi$-covering analytic extension $\tilde \Phi:P\to P'$ for which $\Phi^*(A')=A$.
	\end{thry}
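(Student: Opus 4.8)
The plan is to reduce Theorem~\ref{ExtIso-phi} to the already-proven Theorem~\ref{ExtIso} by the standard trick of replacing a $\varphi$-covering morphism with an $\id$-covering morphism into the pullback bundle. First I would form the pullback principal $K$-bundle $\varphi^*(P')\to M$ together with its pulled-back connection $\varphi^*(A')$, and recall that a $\varphi$-covering bundle morphism $\Phi:P_U\to P'$ is the same data as an $\id_U$-covering bundle isomorphism $\hat\Phi:P_U\to (\varphi^*P')_U$ (the universal property of the pullback: a point of $\varphi^*P'$ over $x$ is a point of $P'_{\varphi(x)}$, so $\hat\Phi(y):=\Phi(y)$ for $y\in P_x$). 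All the objects in sight are real analytic, so $\varphi^*(P')$ is a real analytic $K$-bundle over the real analytic, simply connected manifold $M$, and $\varphi^*(A')$ is an analytic connection on it.

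The next step is to check that the connection-intertwining condition is preserved under this correspondence: $\Phi^*(A')=A_U$ if and only if $\hat\Phi^*\bigl((\varphi^*A')_U\bigr)=A_U$. This is a routine consequence of the naturality of pullbacks of connections together with the fact that the canonical projection $\varphi^*P'\to P'$ is connection-preserving by construction, so that pulling back $\varphi^*A'$ along $\hat\Phi$ computes the same one-form as pulling back $A'$ along $\Phi$. Granting this, Theorem~\ref{ExtIso} applies to the pair of bundles $P\to M$, $\varphi^*(P')\to M$ with connections $A$, $\varphi^*(A')$ and the $\id_U$-covering isomorphism $\hat\Phi:P_U\to(\varphi^*P')_U$: it yields a unique $\id$-covering analytic extension $\widehat{\tilde\Phi}:P\to\varphi^*(P')$ with $\widehat{\tilde\Phi}^*(\varphi^*A')=A$. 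Composing with the canonical projection $\varphi^*(P')\to P'$ produces the desired $\varphi$-covering analytic morphism $\tilde\Phi:P\to P'$ with $\tilde\Phi^*(A')=A$, and it restricts to $\Phi$ on $P_U$.

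For uniqueness I would run the correspondence in reverse: any $\varphi$-covering analytic extension $\tilde\Phi$ of $\Phi$ with $\tilde\Phi^*(A')=A$ corresponds to an $\id$-covering analytic extension $\widehat{\tilde\Phi}$ of $\hat\Phi$ with $\widehat{\tilde\Phi}^*(\varphi^*A')=A$, and such an extension is unique by Theorem~\ref{ExtIso}; hence $\tilde\Phi$ is unique.

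I do not expect a serious obstacle here — the content is entirely in Theorem~\ref{ExtIso} (and ultimately in the covering-space argument of Theorem~\ref{CoveringTh}). The only point requiring a little care is the precise identification of $\varphi$-covering morphisms $P_U\to P'$ with $\id_U$-covering isomorphisms $P_U\to(\varphi^*P')_U$, and the compatibility of the two connection-intertwining conditions under it; both are formal, and in the spirit of Proposition~\ref{I(P,P')} in the Appendix. One should also note that $\hat\Phi$ is automatically an \emph{isomorphism} onto $(\varphi^*P')_U$ (not merely a morphism), since fibrewise it is a $K$-equivariant map between $K$-torsors, which is what lets us invoke Theorem~\ref{ExtIso} as stated.
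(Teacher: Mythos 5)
Your proposal is correct and follows essentially the same route as the paper: the paper also reduces to Theorem~\ref{ExtIso} by passing to the pullback bundle $\varphi^*(P')$ with connection $\varphi^*(A')$ and invoking the equivalence (Remark~\ref{IdCov}) between $\varphi$-covering morphisms $P_U\to P'$ intertwining $A_U$ with $A'$ and $\id_U$-covering isomorphisms $P_U\to\varphi^*(P')_U$ intertwining $A_U$ with $(\varphi^*A')_U$. Your extra remark that the fibrewise map is automatically an isomorphism of $K$-torsors is a worthwhile point of care that the paper leaves implicit.
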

	
\begin{proof}
By Remark \ref{IdCov} in section \ref{PullBack}, the data of a  $\varphi$-covering analytic bundle  morphism $\phi:P_U\to P'$ for which $\Phi^*(A')=A_U$  is equivalent to the data of an $\id_U$-covering analytic bundle isomorphism $\Phi_0:P_U\to\varphi^*(P')_U$ for which $\Phi_0^*((\varphi^*A')_U)=A_U$. Here we denoted by $\varphi^*(P')$  the bundle on $M$ obtained as the pull-back of  $P'$ via $\varphi:M\to M'$. In other words one has
$$\varphi^*(P'):=M\times_{M'} P'
$$
regarded as a principal $K$-bundle on $M$ via the projection on the first factor. Similarly $\varphi^*(A')$ stands for the pull-back connection of $A'$ via $\varphi$. The connection form of this pull-back connection is the pull-back of the connection form of $A'$ via the second projection $M\times_{(\varphi,p')} P'\to P'$.

By Theorem \ref{ExtIso} it follows that there exists an $\id_M$-covering analytic bundle isomorphism $\tilde\Phi_0:P\to\varphi^*(P')$, which extends  $\Phi_0$ such that   
$$(\tilde\Phi_0)^*(\varphi^*(A'))=A.$$
 But the data of such an isomorphism is equivalent to the data of a $\varphi$-covering analytic bundle morphism $\Phi:P\to P'$ such that $\Phi^*(A')=A$.
\end{proof}
	
\section{Locally homogeneous triples on compact manifolds}

Using our results we can prove now the theorem stated in the introduction:  
\begin{thry}\label{main}
Let $M$ be a compact real analytic manifold, and $(g,P\textmap{p} M, A)$ be a real analytic locally homogeneous triple on $M$, where $P\textmap{p} M$ is a $K$-principal bundle with $K$ compact.  Let $\pi:\tilde M\to M$ be the universal cover of $M$, $\Gamma$ be the corresponding covering transformation group,  $\tilde g:=\pi^*(g)$, $q:Q:=\pi^*(P)\to \tilde M$, and  $B:=\pi^*(A)$.   Then there exists  
\begin{enumerate}
\item A connected, closed subgroup $G\subset \Iso(\tilde M,\tilde g)$ acting transitively  on $\tilde M$ which leaves  invariant the gauge class $[B]\in {\cal B}(Q)$,
\item A lift $\jg:\Gamma\to {\cal G}_G^B(Q)$ of   $\iota_\Gamma:\Gamma\to G$.
\end{enumerate}

\end{thry}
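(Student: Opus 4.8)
The strategy is to build $G$ as the closure in $\Iso(\tilde M,\tilde g)$ of a group of isometries that lift to gauge-theoretic symmetries of $(Q,B)$, and to verify this closure still lifts. First, since $M$ is compact and the triple is locally homogeneous and real analytic, the pulled-back metric $\tilde g$ on $\tilde M$ is locally homogeneous, real analytic and complete, hence homogeneous by Singer's theorem \cite{Si}; thus $\Iso(\tilde M,\tilde g)$ acts transitively on $\tilde M$. Let $\hat G := \Iso(\tilde M,\tilde g)$. The key point is to produce, for every $\varphi\in\hat G$, a lift to a bundle isomorphism of $Q$ preserving the gauge class $[B]$. For this I would exploit the local homogeneity of $(g,P,A)$ on $M$: for points $x,x'$ there is a local isometry $\psi: V\to V'$ with a $\psi$-covering bundle isomorphism $\Psi$ satisfying $\Psi^*(A_{V'})=A_V$. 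Pulling these local data back to $\tilde M$ and using the extension theorems of the previous sections (Theorems \ref{ExtSections}, \ref{ExtIso}, \ref{ExtIso-phi}) — valid because $\tilde M$ is simply connected — any \emph{globally defined} isometry $\varphi$ of $\tilde M$ that is locally covered by a connection-preserving bundle map will be \emph{globally} covered by one. So the real work is to show that every $\varphi\in\hat G$ is, locally near every point, covered by a $\Gamma^A$-type-preserving bundle isomorphism.

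\textbf{Constructing local covers.} To get the local covers I would argue as follows. Fix a base point and use that $\tilde M$ is homogeneous: any $\varphi\in\hat G$ can be analyzed near a point $\tilde x$ by comparing with the local model furnished by local homogeneity of $(g,P,A)$ downstairs. Concretely, given $\varphi$ sending $\tilde x_0\mapsto \tilde x_1$, project to $M$: near $\pi(\tilde x_0)$ and $\pi(\tilde x_1)$ local homogeneity gives an isometry $\psi$ and covering isomorphism $\Psi$ with $\Psi^*A=A$; composing with deck transformations and with $\varphi$ one produces, on a small neighborhood of $\tilde x_0$, a local isometry agreeing with $\varphi$ to first order that is covered by a connection-preserving bundle map. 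One then needs a \emph{rigidity} input: two local isometries of an analytic homogeneous space agreeing to first order at a point agree on a neighborhood (this is the analytic rigidity of isometries, as in \cite{KN}), which forces the locally-constructed isometry to coincide with $\varphi$ itself near $\tilde x_0$. This yields, for every $\varphi\in\hat G$ and every point, a local $\varphi$-covering bundle map $\Phi$ with $\Phi^*B = B$ on a neighborhood; then Theorem \ref{ExtIso-phi} (with $M=M'=\tilde M$, $P=P'=Q$, simply connectedness of $\tilde M$) globalizes $\Phi$ to a bundle isomorphism of $Q$ covering $\varphi$ with $\Phi^*B=B$. In particular every $\varphi\in\hat G$ lifts into ${\cal G}^B_{\hat G}(Q)$, so $[B]$ is $\hat G$-invariant by Remark \ref{GInv}.

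\textbf{The closedness issue and the choice of $G$.} One cannot simply take $G = \hat G$ without care: the theorem asks for $G$ \emph{closed} leaving $[B]$ invariant, and the subtlety flagged in the paper's remark is that the stabilizer of $[B]$ in $\Iso(\tilde M,\tilde g)$ need not be closed when $\tilde M$ is non-compact, so the naive "$G$ = stabilizer of $[B]$" fails. My plan is instead: by the previous paragraph the \emph{entire} group $\hat G = \Iso(\tilde M,\tilde g)$ stabilizes $[B]$; now take $G := \hat G^0$, the connected component of the identity, which is automatically closed in $\hat G$ and connected, and acts transitively on $\tilde M$ (the full isometry group of a connected homogeneous Riemannian manifold has its identity component acting transitively). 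This $G$ is closed in $\Iso(\tilde M,\tilde g)$, connected, transitive, and leaves $[B]$ invariant — giving (1). The main obstacle is precisely establishing that the local connection-preserving covers exist for \emph{every} isometry of $\tilde M$ (not just those coming from $\Gamma$ or from the local homogeneity data of $M$); this is where analytic rigidity of isometries and the extension theorems must be combined carefully.

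\textbf{Constructing the lift of $\Gamma$.} For (2), the covering transformation group $\Gamma$ sits inside $\hat G$, and each $\gamma\in\Gamma$ is covered by the bundle isomorphism of $Q = \pi^*(P)$ induced from the identity on $P$ — indeed $Q/\Gamma = P$ tautologically, so there is a canonical action of $\Gamma$ on $Q$ by bundle isomorphisms covering the $\Gamma$-action on $\tilde M$, and since $B = \pi^*A$ this canonical action preserves $B$ exactly. This defines a group homomorphism $\jg:\Gamma\to {\cal G}^B_G(Q)$ with $\pg_B\circ\jg = \iota_\Gamma$ provided $\Gamma\subset G = \hat G^0$; this inclusion holds because $\Gamma$ is discrete and $\tilde M$ is connected, so $\Gamma$ preserves each connected component of... more carefully, one uses that $\pi$ is the \emph{universal} cover and $\hat G^0$, acting transitively with the same orbit, contains all deck transformations (a standard fact once transitivity of $\hat G^0$ is known, since any deck transformation can be connected to the identity through $\hat G$ and the obstruction lies in a discrete quotient that is handled by connectedness of $\tilde M$). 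I would close by remarking that the induced triple on $Q/\Gamma = P$ recovers $(g,P,A)$, matching the follow-up Remark after the theorem.
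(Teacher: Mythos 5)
Your proposal has a genuine gap at its central step: the claim that \emph{every} $\varphi\in\hat G=\Iso(\tilde M,\tilde g)$ is covered by a connection-preserving bundle isomorphism (equivalently, that all of $\Iso(\tilde M,\tilde g)$ stabilizes $[B]$) is false in general, and the ``first-order rigidity'' argument you offer for it does not close. Local homogeneity of the triple gives you, for each pair of points $\tilde x_0,\tilde x_1$, \emph{some} local isometry $\psi$ with $\psi(\tilde x_0)=\tilde x_1$ admitting a connection-preserving cover; it says nothing about the differential $d\psi_{\tilde x_0}$, which may differ from $d\varphi_{\tilde x_0}$ by an arbitrary element of the isotropy. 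There is no reason the connection-preserving local covers realize the full isotropy group, so you cannot arrange agreement ``to first order'' with a prescribed $\varphi$, and analytic rigidity of isometries never gets to act. A concrete counterexample: $\tilde M=\R^2$ flat, $Q=\R^2\times \U(1)$, $B$ with connection form $d\theta+\frac{c}{2}(x\,dy-y\,dx)$, $c\neq 0$. This $(Q,B)$ is homogeneous under translations and rotations, but the reflection $(x,y)\mapsto(x,-y)$ reverses the (gauge-invariant) curvature $c\,dx\wedge dy$, hence does not preserve $[B]$. So $G=\hat G^0$ need not stabilize $[B]$, and your route to part (1) collapses.

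Because you (incorrectly) conclude the stabilizer is everything, you also never confront the actual technical difficulty, which is the \emph{closedness} of the subgroup
$$H=\big\{\psi\in\Iso(\tilde M,\tilde g)\;\big|\;\exists\,\Psi:Q\to Q\ \psi\hbox{-covering with }\Psi^*(B)=B\big\}$$
in $\Iso(\tilde M,\tilde g)$. The paper's proof defines this $H$ directly, proves it is closed via Lemma \ref{PropClosed} (this is where compactness of $K$, horizontal transport to compare lifts, a bootstrapping estimate and Arzel\`a--Ascoli all enter --- none of which appear in your plan), proves $H$ is transitive exactly as you do (local homogeneity downstairs, lift to $\tilde U\simeq U$, extend the local isometry globally by completeness and simple connectedness, then Theorem \ref{ExtIso-phi}), and finally takes $G=H_0$, whose transitivity is obtained by a Baire-type argument using properness of the closed subgroup action and countability of $H/H_0$. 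Your use of the extension theorems and your construction of the lift $\jg$ from the tautological $\Gamma$-action on $Q=\pi^*(P)$ (which preserves $B=\pi^*(A)$ on the nose) are correct and consistent with the paper; but the heart of the theorem --- producing a \emph{closed} transitive stabilizing subgroup without assuming the whole isometry group stabilizes $[B]$ --- is missing.
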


\begin{proof} Let $H\subset \Iso(\tilde M,\tilde g)$ be the subgroup defined by
$$H:=\big\{\psi\in \Iso(\tilde M,\tilde g)|\ \exists \,\Psi:Q\to Q \hbox{ $\psi$-covering bundle isom., }\Psi^*(B)=B\big\}.
$$
Using the fact that $K$ is compact, it follows by Lemma \ref{PropClosed} below, that $H$ is a closed subgroup of the Lie group $\Iso(\tilde M,\tilde g)$. Note that Lemma \ref{PropClosed} applies because the action of the Lie group $\Iso(\tilde M,\tilde g)$ on $\tilde M$ is smooth.\\
\\
{\it Step 1. } $H$ acts transitively on $\tilde M$. \\

 We have to show  that for any two points $\tilde x$, $\tilde x'\in \tilde M$ there exists $\psi\in H$ such that $\psi(\tilde x)=\tilde x'$. Put $x:=\pi(\tilde x)$, $x':=\pi(\tilde x')$. Since 	$(g,P\textmap{p} M, A)$ is locally homogeneous, there exists open neighbourhoods $U$, $U'$ of $x$ and $x'$, an isometry $\varphi:U\to U'$ with $\varphi(x)=x'$,  and a $\varphi$-covering bundle isomorphism $\Phi:P_U\to P_{U'}$ such that $\Phi^*(A_{U'})=A_U$. We can assume of course that $U$ and $U'$ are simply connected.  Let $\tilde U$ ($\tilde U'$) be the connected  component of $\pi^{-1}(U)$ (respectively $\pi^{-1}(U')$) which contains $\tilde x$ (respectively $\tilde x'$).  The map $\pi$ induces diffeomorphisms $\tilde U\simeq U$, $\tilde U'\simeq U'$, and $\pi$-covering bundle isomorphisms $Q_{\tilde U}\simeq P_U$,  $Q_{\tilde U'}\simeq P_{U'}$. Using these identifications we obtain: 
\begin{enumerate}
\item an isometry $\tilde \varphi: \tilde U\to \tilde U'$ with $\tilde \varphi(\tilde x)=\tilde x'$,
\item a  $\tilde \varphi$-covering bundle isomorphism $\tilde \Phi:Q_{\tilde U}\to Q_{\tilde U'}$ with the property 
$$\tilde \Phi^*(B_{\tilde U'})= 	B_{\tilde U}.$$
\end{enumerate}

Since $\tilde M$ is simply connected  and complete, we obtain a global isometry $\psi: \tilde M\to \tilde M$ extending $\tilde\varphi$ (see \cite{Si}, \cite[Theorem 6.3]{KN}). Applying Theorem \ref{ExtIso-phi} we obtain a $\psi$-covering bundle isomorphism $\Psi:Q\to Q$ such that  $\Psi^*(B)=B$. Therefore one has $\psi\in H$. Since $\psi(\tilde x)=\tilde x'$, Step 1 is completed. 
\\ \\
{\it Step 2.} Put $G:=H_0$ (the connected component of $\id_{\tilde M}$ in $H$). We claim that $G$ still acts transitively on $\tilde M$. \\

For every class $\xi\in  H/G$,   choose a  representative $\psi_\xi \in \xi$. In other words one has
$$\xi= G \psi_\xi\  \forall \xi\in   H/G, 
$$
hence $H=\union_{\xi\in H/G} G \psi_\xi$. Fix $\tilde x_0\in \tilde M$. Since $H$ acts transitively on $\tilde M$, we have $\tilde M= H \tilde x_0 $. Therefore
\begin{equation}\label{Dec} \tilde M=\union_{\xi\in H/G} G \psi_\xi(\tilde x_0).
\end{equation}

But $G$ is a closed subgroup of  $\Iso(\tilde M,\tilde g)$, hence its action on $\tilde M$ is proper \cite[Theorem 4]{Ra}.  Therefore the $G$-orbits are embedded closed submanifolds of $\tilde M$. If (by reductio ad absurdum) the $G$-action on $\tilde M$ were not transitive, all these orbits would be submanifolds of dimension strictly smaller than $\dim(\tilde M)$.  But the quotient set $H/G$ is at most countable, hence formula   (\ref{Dec}) would lead to a contradiction. 
\end{proof}

Using the notation introduced in Definition \ref{Hom}  (section \ref{PullBack} in the Appendix) we state: 
 
\begin{lm}\label{PropClosed}
Let $M$ be a differentiable manifold, $K$ be a compact Lie group, $p:P\to M$, $p':P'\to M$ be   principal $K$-bundles on $M$, and  $A\in{\cal A}(P)$, $A'\in{\cal A}(P')$ be  connections on $P$, $P'$ respectively. Let $\alpha:L\times M\to M$ be a smooth action of a Lie group	$L$ on   $M$. For $l\in L$ denote by $\varphi_l:M\to M$ the corresponding diffeomorphism. The subspace
$$L_{AA'}:=\{l\in L|\ \exists \Phi\in\Hom_{\varphi_l}(P,P')\hbox{ such that }\Phi^*(A')=A\} 
$$
is closed in $L$. In the special case $P=P'$, $A=A'$, the obtained subset $L_{AA}$ is a Lie subgroup of $L$.
\end{lm}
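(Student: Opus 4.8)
The plan is to prove that $L_{AA'}$ is closed in $L$ by a direct compactness argument on connection-preserving bundle isomorphisms; the assertion that $L_{AA}$ is a Lie subgroup will then follow formally. Note that Theorem~\ref{ExtIso-phi} cannot be invoked here, since Lemma~\ref{PropClosed} is stated in the smooth category, and the extension of a germ of parallel section fails as soon as the connection has curvature. In our situation, however, we never extend local data: we already possess global bundle isomorphisms for the approximating parameters and only have to pass to a limit, for which the curvature obstruction is harmless.

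For the closedness I would argue as follows. Let $l$ lie in the closure of $L_{AA'}$, choose $l_n\to l$ with $l_n\in L_{AA'}$, and for each $n$ a witness $\Phi_n\in\Hom_{\varphi_{l_n}}(P,P')$ with $\Phi_n^*(A')=A$. One may assume $M$ connected (otherwise argue component by component; for $n$ large $\varphi_{l_n}$ and $\varphi_l$ carry a fixed component of $M$ to the same component). Fix $x_0\in M$, $y_0\in P_{x_0}$, and put $x_*:=\varphi_l(x_0)$. Since $p'\circ\Phi_n=\varphi_{l_n}\circ p$ and $\varphi_{l_n}(x_0)\to x_*$, for $n$ large the points $\Phi_n(y_0)$ lie in a trivializing chart $(p')^{-1}(V)\cong V\times K$ around $x_*$; because $K$ is compact, after passing to a subsequence $\Phi_n(y_0)\to y_*\in P'_{x_*}$. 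This is the only place where the compactness of $K$ enters, and it is indispensable.

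Next I would manufacture the limit morphism $\Phi:P\to P'$ from the triple $(x_0,y_0,y_*)$ by parallel transport, using the $\Phi_n$ only to guarantee consistency. Write $\mathcal{P}^A_\gamma$, $\mathcal{P}^{A'}_\gamma$ for parallel transport along a path $\gamma$. Since each $\Phi_n$ intertwines parallel transport and is $K$-equivariant, $\Phi_n\bigl(\mathcal{P}^A_\gamma(y_0\cdot k)\bigr)=\mathcal{P}^{A'}_{\varphi_{l_n}\circ\gamma}\bigl(\Phi_n(y_0)\cdot k\bigr)$ for every path $\gamma$ from $x_0$ to a point $x$ and every $k\in K$. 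As every $y\in P_x$ has the form $\mathcal{P}^A_\gamma(y_0\cdot k)$ for some such $\gamma$ and a unique $k$, I would set $\Phi(y):=\mathcal{P}^{A'}_{\varphi_l\circ\gamma}(y_*\cdot k)$. Since $\varphi_{l_n}\circ\gamma\to\varphi_l\circ\gamma$ in $C^1$ (the $L$-action being smooth) and parallel transport depends continuously on the curve, $\Phi_n(y)\to\Phi(y)$; independence of $\Phi(y)$ from the choice of $(\gamma,k)$ then follows by taking limits in the identity which expresses that, for each fixed $n$, the two values for $\Phi_n(y)$ obtained from two such choices coincide (as $\Phi_n$ is a genuine map). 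This yields a well-defined, $K$-equivariant $\Phi$ covering $\varphi_l$; it is smooth because locally, using the radial paths of a chart as in the proof of Lemma~\ref{s0}, it is a solution of an ordinary differential equation depending smoothly on parameters; and by construction it intertwines $A$- and $A'$-parallel transport along all paths (via concatenation), hence sends $A$-horizontal spaces to $A'$-horizontal ones, i.e. $\Phi^*(A')=A$. Thus $l\in L_{AA'}$.

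Finally, when $P=P'$ and $A=A'$, the set $L_{AA}$ contains $e$ (witness $\id_P$) and is stable under products and inverses: $\Phi\circ\Psi$ covers $\varphi_l\circ\varphi_m=\varphi_{lm}$ with $(\Phi\circ\Psi)^*A=A$, and $\Phi^{-1}$ covers $\varphi_l^{-1}=\varphi_{l^{-1}}$ with $(\Phi^{-1})^*A=A$. Hence $L_{AA}$ is a subgroup of $L$; being closed by the first part, it is an embedded Lie subgroup by Cartan's closed-subgroup theorem. The hard part of the whole argument is precisely the well-definedness of $\Phi$: a priori the parallel transport from $(x_0,y_0)$ to $(x_*,y_*)$ depends on the chosen path because $A$ has curvature, and consistency is recovered only from the existence of the global maps $\Phi_n$; everything else (smoothness of $\Phi$, $K$-equivariance, preservation of the connection, and the subgroup property) is routine.
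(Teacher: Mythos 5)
Your proof is correct, but it follows a genuinely different route from the paper's. The paper first reduces to a sequence of morphisms covering a \emph{fixed} map: using the exponential chart of $L$ and an auxiliary connection $B'$ it builds automorphisms $\Psi_\lambda$ of $P'$ covering $\varphi_\lambda$ with $\Psi_\lambda\to\id_{P'}$ as $\lambda\to e$, and replaces $\Phi_n$ by $\Sigma_n:=\Psi_{l_\infty l_n^{-1}}\circ\Phi_n\in\Hom_{\varphi_{l_\infty}}(P,P')$, whose pushed-forward connections converge to $A'$. It then proves a separate compactness statement (Lemma \ref{newlemma}): after identifying the resulting gauge transformations $F_n=\Phi_1^{-1}\circ\Sigma_n$ with sections of $\End(E)$ via an embedding $K\hookrightarrow\mathrm{O}(N)$, compactness of $K$ gives uniform $C^0$ bounds, the identity $\nabla^A F_n=(A-B_n)F_n$ gives bounds on all derivatives by bootstrapping, and Arzel\`a--Ascoli plus a diagonal argument produces a convergent subsequence. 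Your argument replaces all of this analysis by the rigidity of connection-preserving morphisms: such a morphism is determined by its value at a single point of $P$, so compactness of $K$ is needed only to converge $\Phi_n(y_0)$ in one fibre, and the limit is then reconstructed by parallel transport, with well-definedness (the only delicate point, as you correctly flag) inherited in the limit from the consistency of each $\Phi_n$. What each approach buys: yours is more elementary (no bootstrapping, no Arzel\`a--Ascoli, no embedding of $K$) and isolates precisely where compactness of $K$ enters; the paper's yields the stronger and reusable Lemma \ref{newlemma} (subsequential weak $C^\infty$ convergence of the morphisms themselves from convergence of the pulled-back connections) and is the standard gauge-theoretic compactness scheme, which survives under much weaker regularity hypotheses. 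Your treatment of the subgroup statement via Cartan's closed-subgroup theorem matches what the paper leaves implicit.
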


\begin{proof}
Let $(l_n)_{n\in \N}$ be   sequence in $L_{AA'}$  converging 	to an element $l_{\infty}\in L$. We will prove that $l_\infty\in L_{AA'}$. 

Let $V\subset T_e L$ be a sufficiently small convex neighbourhood of $0$ in the tangent space $\lg=T_e L$ of $L$ at its unit element $e$, such that the exponential map $\exp:T_eL\to L$ induces a diffeomorphism $E:V\to U$. The map $\eta:[0,1]\times U \to U$ defined by 
$$\eta(t,l)=E(tE^{-1}(l)))
$$
is a smooth homotopy joining the constant map $e$ on $U$ to the identity map $\id_U$. Moreover one has
\begin{equation}\label{lim1}
\lim_{l\to e} \eta(t,l)=e	
\end{equation}
uniformly on $[0,1]$. 
For any $l\in U$ and $x\in M$ we obtain a smooth path $\gamma_x^l:[0,1]\to M$ given by
$$\gamma_x^l(t)=\varphi_{\eta(t,l)}(x)
$$
which joins $x$ to $\varphi_l(x)$. For $l\in U$ denote by $\gamma^l:[0,1]\times M\to M$   the map $(t,x)\mapsto \gamma^l_x(t)$.   Denoting by $\mathrm{p}_M:[0,1]\times M\to M$ the projection on the $M$-factor, and using (\ref{lim1}) we get
\begin{equation}
\label{lim2}\lim_{l\to e} \gamma^l=\mathrm{p}_M
\end{equation}
 in the weak topology ${\cal C}^\infty_w([0,1]\times M,M)$.

Fix any connection $B'$ on  $P'$. For any $l\in U$ we get a  $\varphi_l$-covering automorphism $\Psi_l\in\Hom_{\varphi_l}(P',P')$ defined by
$$\Psi_l(y')=\{\widetilde{\gamma_{p'(y')}^l}\}^{B'}_{y'}
$$
where $\{\widetilde{\gamma_{p'(y')}^l}\}^{B'}_{y'}$ is the $B'$-horizontal lift of $\gamma_{p'(y')}^l$ with initial $\{\widetilde{\gamma_{p'(y')}^l}\}^{B'}_{y'}(0)=y'$.  Using  (\ref{lim2}) we get
\begin{equation}
\label{lim3}\lim_{l\to e} \Psi_l=\id_{P'}
\end{equation} 
in the weak topology ${\cal C}^\infty_w(P',P')$.  Put 
$$\lambda_n:=l_\infty l_n^{-1}.
$$
Since $\lim_{n\to\infty} l_n=l_\infty$ we may suppose that $\lambda_n\in U$ for any $n$. Let $\Phi_n\in \Hom_{\varphi_{l_n}}(P,P')$ such that $\Phi_n^*(A')=A$, and note that 
$$\Sigma_n:=\Psi_{\lambda_n}\circ \Phi_n  
$$
is a $\varphi_{l_\infty}$-covering bundle morphism with the property
$$(\Sigma_n)_*(A)=(\Psi_{\lambda_n})_*(A').
$$
Using (\ref{lim3}) and $\lim_{n\to \infty} \lambda_n=e$, we obtain 
$$\lim_{n\to \infty} (\Sigma_n)_*(A)=A'
$$
in the Fréchet affine space ${\cal A}(P')$.  The claim follows now from Lemma \ref{newlemma} below. 

\end{proof}

\begin{lm}\label{newlemma}
Let $K$ be a compact Lie group, $p:P\to M$, $p':P'\to M'$ be   principal $K$-bundles on $M$ and $M'$, and  $A\in{\cal A}(P)$, $A'\in{\cal A}(P')$ be  connections on $P$, $P'$ respectively. Let $\varphi: M\to M'$ be a smooth map and $(\Phi_n)_n$ be a sequence in $\Hom_\varphi(P,P')$  such that   $\lim_{n\to\infty}\Phi_n^*(A')=A$. Then there exists a subsequence $(\Phi_{n_k})_k$ of $(\Phi_n)_n$ which converges (in the weak ${\cal C}^\infty$-topology) to a morphism $\Phi_\infty\in \Hom_\varphi(P,P')$ satisfying $\Phi_\infty^*(A')=A$.
\end{lm}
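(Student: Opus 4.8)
The plan is to realise $\Hom_\varphi(P,P')$ as a torsor under the gauge group ${\cal G}(P)$, to translate the hypothesis into a statement about a sequence of gauge transformations, to obtain uniform bounds on all derivatives of this sequence using the compactness of $K$, and then to conclude by an Arzel\`a--Ascoli/diagonal argument.

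First I would observe that $\Hom_\varphi(P,P')$ is non-empty (it contains the given sequence), so I may fix $\Phi_0:=\Phi_1$. As recalled in the Appendix (Proposition \ref{I(P,P')} and Remark \ref{IdCov}), a $\varphi$-covering bundle morphism between principal $K$-bundles is automatically a fibrewise isomorphism, hence corresponds to a bundle isomorphism $P\to\varphi^*(P')$ over $M$; consequently any two elements of $\Hom_\varphi(P,P')$ differ by an element of ${\cal G}(P)$. I write $\Phi_n=\Phi_0\circ g_n$ with $g_n\in{\cal G}(P)$ and set $A_0:=\Phi_0^*(A')\in{\cal A}(P)$, so that $\Phi_n^*(A')=g_n^*(A_0)$ and the hypothesis becomes $g_n^*(A_0)\to A$ in the weak ${\cal C}^\infty$-topology on ${\cal A}(P)$ (recall that $M$ need not be compact, so this is local ${\cal C}^\infty$-convergence).

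Identifying ${\cal G}(P)$ with the space of sections of the fibre bundle $\Ad(P)=P\times_{\Ad}K$, whose fibre $K$ is compact, gives at once a uniform ${\cal C}^0$-bound on $(g_n)_n$. Working in a trivialising atlas (or globally, with $g_n$ regarded as an equivariant map $P\to K$), the standard transformation law $\omega_{g_n^*A_0}=\Ad(g_n^{-1})\,\omega_{A_0}+g_n^{-1}dg_n$ yields $g_n^{-1}dg_n=\omega_A-\Ad(g_n^{-1})\,\omega_{A_0}+\epsilon_n$, where $\epsilon_n:=\omega_{g_n^*A_0}-\omega_A\to 0$ in the weak ${\cal C}^\infty$-topology. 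The right-hand side is locally ${\cal C}^0$-bounded (the connection forms $\omega_{A_0},\omega_A$ are fixed, $\Ad(g_n^{-1})$ is bounded since $K$ is compact, and $\epsilon_n\to 0$), hence so is $g_n^{-1}dg_n$, and therefore also $dg_n$ (again using the boundedness of $g_n$). Differentiating this first-order identity repeatedly expresses $d^k g_n$ through $d^{<k}g_n$, the fixed data $\omega_{A_0},\omega_A$, and the derivatives of $\epsilon_n$ (which are locally bounded); an induction on $k$ then gives, on every compact subset of $M$, a uniform ${\cal C}^k$-bound on $(g_n)_n$ for every $k$. This a priori estimate is the heart of the argument, and the compactness of $K$ enters essentially here — both to bound $g_n$ in ${\cal C}^0$ and to pass from a bound on $g_n^{-1}dg_n$ to one on $dg_n$ — as does the fact that the convergence $\Phi_n^*(A')\to A$ is genuinely in the weak ${\cal C}^\infty$-topology.

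To finish, I choose an exhaustion $M=\bigcup_j M_j$ by relatively compact open sets; by Arzel\`a--Ascoli together with the estimates above, a diagonal argument produces a subsequence $(g_{n_k})_k$ converging in the weak ${\cal C}^\infty$-topology to a map $g_\infty$. A ${\cal C}^\infty$-limit of sections of the (closed) bundle $\Ad(P)$ is again a section, so $g_\infty\in{\cal G}(P)$; setting $\Phi_\infty:=\Phi_0\circ g_\infty\in\Hom_\varphi(P,P')$, this is the weak ${\cal C}^\infty$-limit of $(\Phi_{n_k})_k$. Since $g\mapsto g^*(A_0)$ is continuous for the weak ${\cal C}^\infty$-topology, $\Phi_\infty^*(A')=g_\infty^*(A_0)=\lim_k g_{n_k}^*(A_0)=\lim_k\Phi_{n_k}^*(A')=A$, which completes the proof. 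Apart from the estimate of the third paragraph, everything used is standard Arzel\`a--Ascoli compactness together with the torsor description of $\Hom_\varphi(P,P')$ recorded in the Appendix.
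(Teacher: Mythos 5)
Your proposal is correct and follows essentially the same route as the paper: reduce to $\id_M$-covering morphisms, write $\Phi_n=\Phi_1\circ F_n$ with $F_n$ in the gauge group, use compactness of $K$ for the ${\cal C}^0$-bound, bootstrap the first-order gauge-transformation identity to get uniform ${\cal C}^k$-bounds on compacta, and conclude by Arzel\`a--Ascoli plus a diagonal argument. The only cosmetic difference is that the paper fixes an embedding $\rho:K\to \mathrm{O}(N)$ and phrases the estimate as $\nabla^A F_n=(A-B_n)F_n$ inside the vector bundle $\End(P\times_\rho\R^N)$, whereas you work directly with connection forms and $g_n^{-1}dg_n$; the content is the same.
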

\begin{proof}
It suffices to prove the claim in the special case   when $M'=M$, $\varphi=\id_M$. Indeed, using Remark \ref{IdCov} (see section \ref{PullBack}) we obtain morphisms $\Phi_n^0\in\Hom_{\id_M}(P,\varphi^*(P'))$ such that
$$\lim_{n\to\infty} (\Phi_n^0)^*(\varphi^*(A'))=A
$$ 
Since the bijection given by Remark \ref{IdCov} is a homeomorphism with respect to the weak  ${\cal C}^\infty$-topology it suffices to prove the claim for the sequence of $\id_M$-covering morphisms  $(\Phi_n^0)_n$.

From now we suppose that $M'=M$, $\varphi=\id_M$, $\Phi_n\in\Hom_{\id_M}(P,P')$.  We will use ideas form gauge theory \cite[section 2.3.7]{DK}. Note that 
$$F_n:=\Phi_1^{-1}\circ\Phi_n\in\Hom_{\id_M}(P,P)={\cal G}(P)=\Gamma(P\times_\iota K),$$
where ${\cal G}(P)$ is the gauge group of $P$ and $\iota:K\to\Aut(K)$ is the interior morphism defined by $\iota(k)(u)=kuk^{-1}$. Therefore we have a sequence $(F_n)_n$ of gauge transformations of $P$ with the property
\begin{equation}\label{GG}
\lim_{n\to \infty} (F_n)_*(A)= \Phi_1^*(A')=:A_1.	
\end{equation}
Fix an embedding $\rho:K\to \mathrm{O}(N)$, and let  $E:=P\times_\rho\R^N$ be the Euclidian associated bundle. Any gauge transformation $F\in {\cal G}(P)$ can be identified with a section in the endomorphism bundle $\End(E)$, and via this identification one has
$$F_*(A)=A-(\nabla^A F)F^{-1},
$$ 
where $\nabla^A$ is the linear connection induced by $A$ on $\End(E)$. Therefore, putting $B_n:=(F_n)_*(A)$, we get
\begin{equation}\label{GGG}\nabla^A F_n=(A-B_n)F_n 
\end{equation}
where, by (\ref{GG}), 
\begin{equation}
\label{GGGG}\lim_{n\to \infty} B_n=A_1
\end{equation}
 in the  Fréchet ${\cal C}^\infty$-topology of the affine  space ${\cal A}(P)$. Since $K$ is compact, $F_n$ is uniformly bounded on $M$. On  the other hand by (\ref{GGGG}) the sequence $(A-B_n)_n$ is bounded in  the Fréchet space $\Gamma(\Lambda^1_M\otimes\ad(P))\subset\Gamma(\Lambda^1_M\otimes\End(E))$. Using (\ref{GGG}) and a standard bootstrapping procedure we see that all partial derivatives of $F_n$ (with respect to local coordinates in $M$ and trivializations of $P$) are uniformly bounded on any compact subset of $M$.  Using a well-known combination of the Arzela-Ascoli theorem and the diagonal argument we obtain a subsequence $(F_{n_k})_k$ of $(F_n)$ which converges in $\Gamma(\End(E))$ (with respect to its Fréchet ${\cal C}^\infty$-topology) to a section $F_\infty\in \Gamma(\End(E))$. Since $\rho(K)$ is closed in $\mathrm{gl}(N,\R)$, it follows that ${\cal G}(P)$ is closed in $\Gamma(\End(E))$, hence $F_\infty\in {\cal G}(P)$. Replacing $n$ by $n_k$ and taking  $k\to \infty$ in (\ref{GGG}) we get $\nabla^A F_\infty= (A-A_1) F_\infty$, i.e. $A_1=(F_\infty)_*(A)$. It suffices to put $\Phi_{n_k}=\Phi_1\circ F_{n_k}$, $\Phi_\infty:= \Phi_1\circ F_\infty$.
\end{proof}
As explained in section \ref{LocHom-intro}, using Theorem \ref{main} shows that:
\begin{co}\label{main-coro} Let $M$ be a compact real analytic manifold, and $K$ be a compact Lie group.  
Then any real analytic locally homogeneous triple $(g,P\textmap{p} M, A)$ with structure group $K$ on $M$ can be   identified with a $\Gamma$-quotient of the  homogeneous triple $(\tilde g=\pi^*(g),   Q:=\pi^*(P)\textmap{q} \tilde M, B)$ on the universal cover $\tilde M$.	
\end{co}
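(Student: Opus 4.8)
The plan is to read the corollary off directly from Theorem \ref{main}, the quotient construction recalled in Section \ref{LocHom-intro}, and the remark stated right after the statement of Theorem \ref{main}.

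First I would apply Theorem \ref{main} to the given triple: it supplies a connected, closed subgroup $G\subset\Iso(\tilde M,\tilde g)$ acting transitively on $\tilde M$ and leaving the gauge class $[B]\in{\cal B}(Q)$ invariant, together with a lift $\jg:\Gamma\to{\cal G}_G^B(Q)$ of the inclusion $\iota_\Gamma:\Gamma\hookrightarrow G$. By Remark \ref{GInv}, the $G$-invariance of $[B]$ is exactly condition ($C_G$) for $B$, so ${\cal G}_G^B(Q)$ fits into the short exact sequence $\{1\}\to{\cal G}^B(Q)\to{\cal G}_G^B(Q)\textmap{\pg_B}G\to\{1\}$ with ${\cal G}^B(Q)$ isomorphic to a closed (hence compact) subgroup of $K$; since $G$ is transitive on $\tilde M$, the pair $(Q,B)$ is homogeneous with respect to the Lie group ${\cal G}_G^B(Q)$, i.e. $(\tilde g,Q\textmap{q}\tilde M,B)$ is a homogeneous triple.

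Next, $\Gamma\subset G$ acts properly discontinuously on $\tilde M$, being the covering transformation group of the universal cover $\pi:\tilde M\to M$, with quotient $\tilde M/\Gamma=M$ carrying the metric $g$ (since $\tilde g=\pi^*(g)$); together with the lift $\jg$, this is precisely the data needed in Section \ref{LocHom-intro} to form the $\Gamma$-quotient of $(\tilde g,Q\textmap{q}\tilde M,B)$ associated with $\jg$, a triple $(g,P_\jg:=Q/\Gamma\to M,A_\jg)$. It then remains to identify $(g,P_\jg\to M,A_\jg)$ with the original triple $(g,P\textmap{p}M,A)$, which is the content of the remark following Theorem \ref{main}: writing $Q=\pi^*(P)=\tilde M\times_M P$, the canonical second projection $\mathrm{pr}:Q\to P$ covers $\pi$ and is invariant under the $\Gamma$-action on $Q$ defined by $\jg$, which coincides with the tautological action of the deck group on the first factor of $\tilde M\times_M P$; hence $\mathrm{pr}$ descends to an isomorphism of principal $K$-bundles $Q/\Gamma\to P$ over $\id_M$, and since $B=\pi^*(A)=\mathrm{pr}^*(A)$ the connection $A_\jg$ induced on $Q/\Gamma$ is carried to $A$ by this isomorphism. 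As the Riemannian metric on $M$ underlying both triples is the unique one satisfying $\tilde g=\pi^*(g)$, the identification is complete.

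I expect no substantial obstacle here beyond Theorem \ref{main} itself: the corollary is essentially a repackaging of that theorem together with the construction of Section \ref{LocHom-intro}. The one point deserving explicit attention is the a priori ambiguity in the choice of the lift $\jg$ (distinct lifts may produce non-isomorphic $\Gamma$-quotients), so one must record that the lift furnished by the proof of Theorem \ref{main} is the natural one induced by the covering transformations of $\pi$ acting on $Q=\pi^*(P)$; with this choice, the identification in the preceding paragraph is immediate.
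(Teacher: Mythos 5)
Your proof is correct and follows the same route as the paper, which derives the corollary directly from Theorem \ref{main}, the quotient construction of Section \ref{LocHom-intro}, and the remark following the theorem's statement (the paper in fact gives no further argument). Your explicit observation that the lift $\jg$ must be taken to be the natural one induced by the deck transformations acting on $Q=\pi^*(P)$, so that the projection $Q\to P$ descends to the required isomorphism, is a useful clarification that the paper leaves implicit.
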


\section{Appendix}

\subsection{Parallel transport in associated bundles} \label{ParTran}
Let $M$ be a differentiable $n$-manifold, $K$ be a Lie group, and $p:P\to M$ be a principal $K$-bundle on $M$, endowed with a connection $A$. Let $\alpha:[t_0,t_1]\to M$ be a smooth curve in $M$. By a well known, fundamental theorem in Differential Geometry it follows that any point $u_0\in P_{\alpha(t_0)}:=p^{-1}(\alpha(t_0))$ there is a unique horizontal lift $\alpha_{u_0}:[t_0,t_1]\to P$ such that $\alpha_{u_0}(t_0)=u_0$ \cite[Proposition 3.1]{KN}.

We also have an existence and unicity horizontal lift theorem  for  associated bundles. More precisely, let $p:P\to M$ be a principal $K$-bundle endowed with a connection $A$,    $\lambda:K\times F\to F$ be a smooth action of $K$ on a differentiable manifold $F$, and let $E:=P\times_\lambda F$ be the associated bundle with fibre $F$. The connection $A$ is a horizontal  rank $n$-distribution on $P$, which defines in the obvious way a rank $n$-distribution on $P\times F$, whose projection on $E=(P\times F)/K$ is a well defined horizontal rank $n$-distribution on $E$. This distribution will be called the connection on $E$ induced by $A$, and will be denoted by $\Gamma^A$.  With this definitions we have:

\begin{pr}
Let $p:P\to M$ be a principal $K$-bundle endowed with a connection $A$,    $\lambda:K\times F\to F$ be a smooth action of $K$ on a manifold $F$, and let $E:=P\times_\lambda F$ be the associated bundle with fibre $F$.  Let $\alpha:[t_0,t_1]\to M$ be a smooth curve in $M$. For any point $e_0\in E_{\alpha(t_0)}$ there is a unique $\Gamma^A$-horizontal lift $\alpha_{e_0}:[t_0,t_1]\to E$ such that $\alpha_{e_0}(t_0)=e_0$. 
\end{pr}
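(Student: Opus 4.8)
The plan is to reduce the statement to the existence and uniqueness theorem for horizontal lifts in the principal bundle $P$ itself, recalled above as \cite[Proposition 3.1]{KN}. Write $\pi:P\times F\to E=(P\times F)/K$ for the canonical projection, and note that $p_E\circ\pi=p\circ\mathrm{pr}_P$, where $\mathrm{pr}_P:P\times F\to P$ is the first projection, and that by definition $\Gamma^A$ is the image under $d\pi$ of the rank $n$ distribution $(u,f)\mapsto A_u\times\{0\}$ on $P\times F$. For existence, given $e_0\in E_{\alpha(t_0)}$, I would choose a representative $(u_0,f_0)\in P_{\alpha(t_0)}\times F$ with $\pi(u_0,f_0)=e_0$, take the $A$-horizontal lift $\alpha_{u_0}:[t_0,t_1]\to P$ of $\alpha$ with $\alpha_{u_0}(t_0)=u_0$ (defined on the whole interval by \cite[Proposition 3.1]{KN}), and set $\alpha_{e_0}(t):=\pi(\alpha_{u_0}(t),f_0)$. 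Then $p_E\circ\alpha_{e_0}=p\circ\alpha_{u_0}=\alpha$ and $\alpha_{e_0}(t_0)=e_0$, so $\alpha_{e_0}$ is a lift of $\alpha$ through $e_0$; and since $\dot\alpha_{u_0}(t)\in A_{\alpha_{u_0}(t)}$, the vector $(\dot\alpha_{u_0}(t),0)$ lies in $A_{\alpha_{u_0}(t)}\times\{0\}$, whence $\dot\alpha_{e_0}(t)=d\pi(\dot\alpha_{u_0}(t),0)\in\Gamma^A_{\alpha_{e_0}(t)}$. (This is not needed for existence, but the $K$-invariance of $A$ shows $\alpha_{e_0}$ to be independent of the chosen representative: another representative replaces $\alpha_{u_0}$ by a right translate of itself, which leaves $\pi(\alpha_{u_0}(t),f_0)$ unchanged.)

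For uniqueness I would use that, $\Gamma^A$ being a horizontal distribution of rank $n=\dim M$, the differential $dp_E$ restricts to a fibrewise isomorphism from $\Gamma^A$ onto $TM$; hence for each $t\in[t_0,t_1]$ and each $e\in E_{\alpha(t)}$ there is a unique $X_t(e)\in\Gamma^A_e$ with $dp_E(X_t(e))=\dot\alpha(t)$, and a curve $\beta:[t_0,t_1]\to E$ covering $\alpha$ is $\Gamma^A$-horizontal precisely when it is an integral curve of the smooth time-dependent vector field $X_t$ on the pull-back bundle $\alpha^*E$. Uniqueness of integral curves with prescribed value at $t_0$ then forces any $\Gamma^A$-horizontal lift $\beta$ with $\beta(t_0)=e_0$ to agree with $\alpha_{e_0}$ on all of $[t_0,t_1]$. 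Alternatively one may run a connectedness argument on $\{t\in[t_0,t_1]:\beta(t)=\alpha_{e_0}(t)\}$, which is nonempty and closed and is open because, in a local trivialization of $P$ over a neighbourhood of $\alpha(t)$, the horizontality condition becomes a first-order ordinary differential equation for a curve in $F$, to which the standard uniqueness theorem for ODEs applies.

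I do not expect a genuine obstacle: the construction reduces the real content to the already available principal-bundle statement, and the only points requiring a little care are the bookkeeping with the quotient $E=(P\times F)/K$ (in particular the equivariance invoked for well-definedness) and, in the uniqueness part, the passage from the geometric horizontality condition to an ordinary differential equation.
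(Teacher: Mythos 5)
Your proof is correct, and the existence half is exactly the paper's: represent $e_0=[u_0,y_0]$, take the $A$-horizontal lift of $\alpha$ through $u_0$ in $P$, and push $t\mapsto (\alpha_{u_0}(t),y_0)$ down to $E$. Where you genuinely diverge is the uniqueness half. The paper stays inside the principal-bundle formalism: it views $q:P\times F\to E$ as a principal $K$-bundle carrying the connection $B_{(u,y)}=A_u\times T_yF$, lifts a competing $\Gamma^A$-horizontal curve $\beta$ $B$-horizontally to $P\times F$, notes that the lifted velocity must lie in $B\cap (q_*)^{-1}(\Gamma^A)=A\times\{0\}$, so the $P$-component is $A$-horizontal and the $F$-component is constant, and then invokes uniqueness in $P$ once more. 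You instead work directly on $E$: horizontality over $\alpha$ is the ODE $\dot\beta(t)=X_t(\beta(t))$ for the unique $\Gamma^A$-horizontal field over $\dot\alpha(t)$, viewed as a vector field on the total space of $\alpha^*E$ (the vector $(1,X_t(e))$ is tangent to $\alpha^*E\subset[t_0,t_1]\times E$ precisely because $(p_E)_*X_t(e)=\dot\alpha(t)$, a point worth stating explicitly), and uniqueness of integral curves finishes the argument. Both proofs are complete. Your route is more elementary and more general: it uses only that $\Gamma^A$ is a smooth rank-$n$ horizontal distribution, so it establishes uniqueness for an arbitrary Ehresmann connection on a fibre bundle --- consistent with the paper's subsequent remark (citing [Mo]) that compactness of the fibre is needed only for \emph{existence} in that generality. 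The paper's route keeps everything within the horizontal-lift theorem for principal bundles at the cost of introducing the auxiliary connection $B$ on $P\times F\to E$ and the small computation identifying $B\cap(q_*)^{-1}(\Gamma^A)$.
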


A proof of this results is sketched in \cite[p. 88]{KN}.  In this section we give a detailed proof for completeness.

\begin{proof}
The Lie group $K$ acts freely on $P\times F$ from the right by 
$$ ((u,y),k)\mapsto (uk,k^{-1}y).$$
By definition the associated bundle $E:=P\times_\lambda F$ is the quotient manifold $(P\times F)/K$, and the projection $q:P\times F\to E$ is principal $K$-bundle, which comes with a natural connection $B$ given by
$$B_{(u,y)}:=A_u\times T_y F\ \forall (u,y)\in P\times F.
$$
The bundle projection map $p:P\to M$ induces a locally trivial submersion $p_E:E\to M$, such that the following diagram is commutative:
$$
\begin{diagram}[s=7mm]
P\times F&\rTo{q} & E\\
\dTo^{p_1} & &\dTo_{p_E} \\
P &\rTo{p} & M\ . \\
\end{diagram}  
$$
In other words the quotient map $q:P\times F\to E$ is   fiber preserving map over $M$. For a pair $(u,y)\in P\times F$  put  $[u,y]:=q(u,y)$. With this notation the map   $p_E:E\to M$ is given by $p_E([u,y]):=p(u)$.   By the definition of $\Gamma^A$, for a point $e:=[u,y]\in E$ we have  
$$  \Gamma^A_{e}:=q_{*(u,y)}(A_u\times\{0_y\})$$
where $\{0_y\}$ is the zero subspace of $T_yF$. Put $x_0:=\alpha(t_0)$, and let  
 $e_0=[u_0,y_0]$ be a point in the fiber $E_{x_0}$.  It follows $u_0\in p^{-1}(x_0)$.

 By the classical horizontal lift theorem \cite[Proposition 3.1]{KN}, there exists a  $A$-horizontal lift $\alpha_{u_0}:[t_0,t_1]\to P$. Therefore  for every $t\in[t_0,t_1]$ we have $\alpha_{u_0}'(t)\in A_{\alpha_{u_0}(t)}$. It suffices to note that the curve  $\alpha_{e_0}:[t_0,t_1]\to E$   defined by 
 $$\alpha_{e_0}(t):=[\alpha_{u_0}(t),y_0]$$
is a $\Gamma^A$-horizontal lift of $\alpha$ satisfying the initial condition $\alpha_{e_0}(t_0)=e_0$.

To prove unicity, let $\beta: [t_0,t_1]\to E$ be a $\Gamma^A$-horizontal lift of $\alpha$ with $\beta(t_0)=e_0$, and let 
$$\beta_{(u_0,y_0)}=(\beta^P_{(u_0,y_0)},\beta^F_{(u_0,y_0)}):[t_0,t_1]\to P\times F$$
 be a $B$-horizontal lift of $\beta$ satisfying the initial condition $\beta_{(u_0,y_0)}(t_0)=(u_0,y_0)$. Since $\beta_{(u_0,y_0)}$ is a $B$-horizontal curve, it follows that 
\begin{equation}\label{eq1}\beta'_{(u_0,y_0)}(t)\in B_{\beta_{(u_0,y_0)}(t)}\ \forall t\in [t_0,t_1],
	\end{equation}
and, since $\beta_{(u_0,y_0)}(t_0)=(u_0,y_0)$ is a lift of a $\Gamma^A$-horizontal curve, it follows that 
\begin{equation}\label{eq2}\beta'_{(u_0,y_0)}(t)\in \big\{(q_*)^{-1}(\Gamma^A)\big\}_{\beta_{(u_0,y_0)}(t)}\ \forall t\in [t_0,t_1].
\end{equation}
But it is easy to show that  the intersection $B\cap (q_*)^{-1}(\Gamma_A)$ of the distributions $B$, $(q_*)^{-1}(\Gamma^A)\subset T_{P\times F}$ coincides with the distribution $\tilde A\subset T_{P\times F}$ given by
$$\tilde A_{(u,y)}:=A_u\times \{0_y\}. 
$$
Therefore, by (\ref{eq1}), (\ref{eq2}) it follows that $\beta^P_{(u_0,y_0)}$ is $A$-horizontal, and $\beta^F_{(u_0,y_0)}$ is constant.
Using the unicity part of \cite[Proposition 3.1]{KN}, we obtain  $\beta^P_{(u_0,y_0)}=\alpha_{u_0}$, hence $\beta_{(u_0,y_0)}(t)=(\alpha_{u_0}(t),y_0)$, hence $\beta_{(u_0,y_0)}=\alpha_{e_0}$.

\end{proof}

\begin{re}
A similar theorem holds for general fiber bundles endowed with a connection, but one has to assume that the fiber is compact \cite{Mo}.
\end{re}

\subsection{Pull back bundles and pull back connections}  \label{PullBack}

Let $M$ and $M'$ be smooth manifolds, $K$ a Lie group, and  $\varphi:M\to M'$ be   a smooth map.
Let $p':P'\to M'$ be a principal $K$-bundle on $M'$. Recall that the pull-back bundle $\varphi^*(P')$ is defined by
$$\varphi^*(P'):=M\times_{M'} P'=(\varphi\times p')^{-1}(\Delta_{M'}),
$$
where $\varphi\times p':M\times P'\to M'\times M'$ denotes the product map, and $\Delta_{M'}\subset M'\times M'$ is the diagonal submanifold. One can check that $\varphi\times p'$ is transversal to  $\Delta_{M'}$, hence $M\times_{M'} P'$ is a submanifold of $M\times P'$, whose   tangent space  at a point $(x,y')$ is  
$$T_{(x,y')}(M\times_{M'} P')=\{(u,w)\in TM\times TP' \ |\ \varphi_*(u)=p'_*(w)\}.
$$
Note that set theoretically one has
$$\varphi^*(P')=\bigsqcup_{x\in M}P'_{\varphi(x)}.
$$
The  projections   $p_1:\varphi^*(P')\to M'$, $p_2:\varphi^*(P')\to P'$ fit in the commutative diagram 
$$
\begin{diagram}[s=7mm]
\varphi^*(P') & \rTo^{p_2}  & P'\\	
\dTo^{p_1} &  &\dTo{p'} \\
M & \rTo^{\varphi }  & M'
\end{diagram}
$$
in which $p_1$ is a $K$-principal bundle on $M$ with respect to the right $K$-action induced from $P'$.\\

Let  $A'$  be a connection on   $P'$ and $\omega^{A'}\in A^1(P,\kg)$ its connection form. By definition the pull-back connection $\varphi^*(A')$ is the connection on $\varphi^*(P')$ defined by the connection form $p_2^*(\omega^{A'})$. With this definition we have for any pair $(x,y')\in \varphi^*(P')$:
$$\varphi^*(A')_{(x,y')}=\{(u,w)\in T_xM\times T_{y'}(P)|\ \varphi_{*x}(u)=p'_{*y'}(w),\ w\in A'_{y'}\}  $$
$$=\{(u,w)\in T_xM\times A'_{y'}|\ \varphi_{*x}(u)=p'_{*y'}(w)\}.
$$

\begin{dt} \label{Hom} Let $M$, $M'$ be smooth manifolds,  $K$ a Lie group, and  $\varphi:M\to M'$ be  a smooth map.  Let $P\textmap{p} M$, $P'\textmap{p'} M'$ be   principal $K$-bundles 	on $M$, $M'$  respectively. We denote by $\Hom_\varphi(P,P')$ the set of $\varphi$-covering bundle  morphisms $P\to P'$  which are compatible with the group morphism $\id_K$  \cite[section I.5]{KN}.  
	
\end{dt}
For a $\varphi$-covering bundle morphism $\Phi\in \Hom_\varphi(P,P')$ and a connection $A'$ on $P'$ one defines the pull-back connection $\Phi^*(A')$ using the  connection form $\Phi^*(\omega^{A'})$ \cite[Proposition 6.2]{KN}.
 
\begin{re} \label{IdCov} With the notations above the following holds:
\begin{enumerate}
\item 	The map $\Hom_\varphi(P,P')\to  \Hom_{\id_M}(P,\varphi^*(P'))$ given by $\Phi\mapsto\Phi_0$, where
$$\Phi_0(y):=(p(y),\Phi(y))
$$
is bijective.
\item For any connection $A'$ on $P'$ and bundle morphism  $\Phi\in \Hom_\varphi(P,P')$  one has
$$\Phi^*(A')=\Phi_0^*(\varphi^*(A')).
$$
\end{enumerate}
\end{re}

 \subsection{Bundle isomorphisms compatible with a pair of connections}

Let $P$, $P'$ be $K$-principal bundles over a manifold $M$, and let $(A,A')\in {\cal A}(P)\times{\cal A}(P')$ be a pair of connections. The goal of this section is the proof of Proposition \ref{I(P,P')}, which shows that the data of an $\id$-covering bundle morphism $\Phi:P\to P'$ with the  property $\Phi^*(A')=A$ is equivalent to the data of an $(A,A')$-parallel section in a bundle $I(P,P')$ associated with the fibre product $P\times_M P'$ and the action $\tau:(K\times K)\times K\to K$ given by $((k_1,k_2),k)\mapsto k_2 k k_1^{-1}$ (see section \ref{ExtSect}).  The first part of the following proposition is well-known. The second part can be checked easily.

\begin{pr}\label{EquivSec}
Let $p:P\to M$ be a principal $K$-bundle,  $\alpha:K\times F\to F$ be a smooth left action of $K$ on a manifold $F$, and $E:=P\times_KF$ be  the associated fiber bundle with fiber $F$. Denote by ${\cal C}^K(P,F)$ the space of smooth $K$-equivariant maps $P\to F$:
$$ {\cal C}^K(P,F):=\{\sigma:P\to F|\ \sigma(y.k)=k^{-1}.\sigma(y)|\ \forall y\in P,\ \forall k\in K \}. $$
 Then
\begin{enumerate}
\item The map $\cal F:{\cal C}^K(P,F)\to \Gamma(M,E)$ given by 
$$\sigma\mapsto s_\sigma, \ s_\sigma(x):=[y,\sigma(y)], \hbox{ where }y\in P_x,
$$
is bijective. 
\item   Let $\sigma\in {\cal C}^K(P,F)$, $A$ be a connection on $P$, and  $\Gamma^A$ be  the induced connection on $E$. The following conditions are equivalent:
\begin{enumerate}[(i)]
\item  The  section $s_\sigma$ is $\Gamma^A$-parallel.  
\item  The restriction of $\sigma_{*y}$ to the horizontal distribution of $A$ vanishes.
\end{enumerate}

\end{enumerate}
\end{pr}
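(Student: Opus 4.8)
The plan is to dispatch the two parts separately; both are essentially formal, so I will only indicate the mechanism in each.

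For part (1), the crucial observation is that $s_\sigma$ is well defined as a map $M\to E$: given $y,y'\in P_x$ one has $y'=y.k$ for a unique $k\in K$, and the $K$-equivariance of $\sigma$ gives $[y',\sigma(y')]=[y.k,k^{-1}.\sigma(y)]=[y,\sigma(y)]$ directly from the definition of the equivalence relation whose quotient is $E$. Since $p_E([y,\sigma(y)])=p(y)$, the map $s_\sigma$ is a section, and its smoothness is verified in a local trivialization of $P$ (where $\sigma$ becomes an ordinary $F$-valued function whose value on the trivializing section $\cal F$ simply reads off). For bijectivity of $\cal F$ I would use the freeness of the $K$-action $(y,f).k:=(y.k,k^{-1}.f)$ on $P\times F$, which implies $[y,f]=[y,f']\iff f=f'$; injectivity of $\cal F$ is then immediate, and for surjectivity one defines, for $s\in\Gamma(M,E)$, the map $\sigma(y)$ to be the unique element of $F$ with $s(p(y))=[y,\sigma(y)]$, the equivariance and smoothness of $\sigma$ following again from freeness and a local trivialization.

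For part (2), I would use the description of the induced connection recalled in Section \ref{ParTran}: with $q:P\times F\to E$ the quotient map, one has $\Gamma^A_{[u,f]}=q_{*(u,f)}(A_u\times\{0_f\})$. Introduce the smooth map $\hat\sigma:P\to P\times F$, $\hat\sigma(y):=(y,\sigma(y))$, so that $q\circ\hat\sigma=s_\sigma\circ p$. Since $p_{*y}$ restricts to an isomorphism $A_y\to T_{p(y)}M$, differentiating this identity shows that $s_\sigma$ is $\Gamma^A$-parallel if and only if $q_{*}\big(v,\sigma_{*y}(v)\big)\in q_*(A_y\times\{0_{\sigma(y)}\})$ for every $y\in P$ and every $v\in A_y$. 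To conclude I would identify $\ker q_{*(y,\sigma(y))}$ with the tangent space to the $K$-orbit of $(y,\sigma(y))$ under the above free action, whose vectors have $P$-component a fundamental vector of the $K$-action on $P$, hence \emph{vertical}. As $v$ and any competing $v'\in A_y$ are horizontal, the $P$-components cannot be absorbed into the orbit direction, so the displayed membership forces $\sigma_{*y}(v)=0$; conversely $\sigma_{*y}|_{A_y}=0$ trivially gives it. Hence $s_\sigma$ is $\Gamma^A$-parallel iff $\sigma_{*y}$ vanishes on the horizontal distribution for all $y$, which is exactly condition (ii).

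The one point deserving care — and the step I would write out in full — is this last identification of $\ker q_{*(y,f)}$ together with the observation that a horizontal vector in $P$ cannot be cancelled against a fundamental (vertical) vector; this is precisely where the freeness of the $K$-action on $P$, i.e.\ the injectivity of $\xi\mapsto\xi^*$, enters. The remainder is routine bookkeeping in local trivializations.
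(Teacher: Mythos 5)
Your proposal is correct and follows essentially the same route as the paper: part (1) via the standard equivariant-map/section correspondence, and part (2) by differentiating $q\circ\hat\sigma=s_\sigma\circ p$, comparing with $\Gamma^A_{s(x)}=q_*(A_y\times\{0\})$, and using that $\ker q_{*(y,\sigma(y))}$ projects isomorphically onto the vertical space $V_y$ (so a horizontal difference lying in it must vanish). Your treatment of part (1) is more explicit than the paper's, which simply records the inverse map, but the substance is identical.
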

\begin{proof}
(1) This result is well known. We mention only that the inverse of ${\cal F}$ is the map $s\mapsto \sigma^s$ where, for a section $s\in\Gamma(E)$, the equivariant map $\sigma^s$ is defined  by the identity
\begin{equation}\label{ssigma}
s(p(y))=[y,\sigma^s(y)], \forall y\in P.	
\end{equation}
\\
(2) Let $q:P\times F\to E$ be  the quotient map. By (\ref{ssigma}) we know that 
\begin{equation}\label{ssigmaq}s_\sigma(x)=[y,\sigma(y)]=q(y,\sigma(y)).
\end{equation}

Let $v\in T_xM$,  $y\in p^{-1}(x)$, and let $w$ be a lift of $v$ in $T_yP$. Using (\ref{ssigmaq}) we obtain easily:
\begin{equation}
 s_{*x}(v)=q_{*(y,\sigma(y))}(w,\sigma_{*y}(w)).
 \end{equation}
Recall that, by the definition of $\Gamma^A$, we have $q_{*(y,\sigma(y))}(A_y\times \{0\})=\Gamma^A_{s(x)}$. Therefore, if $\resto {\sigma_{*} } {A}=0$,
then, choosing $w$ to be the horizontal lift of $v$ at $y$, we get $s_{*x}(v)\in\Gamma^A_{s(x)}$. Conversely, supposing that for every $v\in T_xM$ we have $s_{*x}(v)\in\Gamma^A_{s(x)}$, we show that 
$\resto {\sigma_{*} } {A}=0$. Let $y\in P$, $w\in A_y$, and $v=p_*(w)$. Using again (\ref{ssigmaq}) we see that    

$$s_{*}(v)=q_{*}(w, \sigma_{*}(w))\in q_{*}(A_y\times \{0\}).$$
Therefore, there exists $u\in A_y$ such that 
$$(w-u,\sigma_{*}(w))\in\ker (q_{*(y,\sigma(y))}).
$$
The projection on the first factor induces an isomorphism $\ker (q_{*(y,\sigma(y))})\to V_y$, where $V_y\subset T_yP$ denotes the vertical tangent space at $y$. Taking into account that  $w-u\in A_y$, we get $w-u=0$, and $\sigma_*(w)=0$. 
\end{proof}
\begin{pr}\label{I(P,P')}
Let $p:P\to M$, $p':P'\to M$ be  $K$-principal bundles over $M$, and let $I(P,P')$ be the associated bundle $(P\times_M P')\times_\tau K$. There exist a natural bijection $S:\Hom_{\id}(P,P')\to \Gamma(M,I(P,P'))$ between the space of   $\id$-covering $K$-bundle isomorphisms and the space of sections   $\Gamma(M,I(P,P'))$ with the following property: For any pair of connections $(A,A')\in {\cal A}(P)\times {\cal A}(P')$ the  following conditions are equivalent:
\begin{enumerate}[(i)]
\item  $\Phi^*(A')=A$.
\item  $S(\Phi)$ is $\Gamma^{A\times A'}$-parallel.
\end{enumerate}

\end{pr}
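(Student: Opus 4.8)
The strategy is to reduce both assertions to Proposition~\ref{EquivSec}, applied to the principal $(K\times K)$-bundle $R:=P\times_M P'$ over $M$ (with right action $(y,y')\cdot(k_1,k_2)=(yk_1,y'k_2)$) and to the left $(K\times K)$-action $\tau$ on the fibre $K$, so that $I(P,P')=R\times_\tau K$ by definition. By Proposition~\ref{EquivSec}(1), $\Gamma(M,I(P,P'))$ is in natural bijection with the space ${\cal C}^{K\times K}(R,K)$ of equivariant maps $\sigma:R\to K$, and for the action $\tau$ the equivariance condition $\sigma(r\cdot g)=g^{-1}\cdot\sigma(r)$ becomes
$$\sigma(yk_1,y'k_2)=k_2^{-1}\,\sigma(y,y')\,k_1\qquad\forall\,(y,y')\in R,\ (k_1,k_2)\in K\times K.$$
Hence it suffices to produce a natural bijection $S_0:\Hom_{\id}(P,P')\to{\cal C}^{K\times K}(R,K)$ and to translate conditions (i) and (ii) through it; one then sets $S:={\cal F}\circ S_0$, with ${\cal F}$ the bijection of Proposition~\ref{EquivSec}(1).

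For $\Phi\in\Hom_{\id}(P,P')$ I define $\sigma^\Phi:R\to K$ by the identity $\Phi(y)=y'\cdot\sigma^\Phi(y,y')$, which is meaningful (and uses no choices) since $\Phi(y)$ and $y'$ lie in the same $K$-torsor $P'_{p(y)}$. From $\Phi(yk_1)=\Phi(y)k_1$ and the definition of the right action one reads off $\sigma^\Phi(yk_1,y')=\sigma^\Phi(y,y')k_1$ and $\sigma^\Phi(y,y'k_2)=k_2^{-1}\sigma^\Phi(y,y')$, i.e.\ exactly the equivariance above. Conversely, for equivariant $\sigma$ the formula $\Phi(y):=y'\cdot\sigma(y,y')$ is independent of the choice of $y'\in P'_{p(y)}$ (by equivariance in the second slot), is $K$-equivariant and covers $\id_M$ (by equivariance in the first slot), and is fibrewise an isomorphism of $K$-torsors, so $\Phi\in\Hom_{\id}(P,P')$; smoothness of $\sigma^\Phi$ and of $\Phi$ is checked in local trivialisations, and the two assignments are mutually inverse. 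This yields $S_0$, hence $S$.

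For the equivalence of (i) and (ii) I would first record that $\Phi^*(A')=A$ is equivalent to $\Phi_{*y}(A_y)=A'_{\Phi(y)}$ for every $y\in P$: a morphism compatible with $\id_K$ carries the fundamental vector field of $\xi\in\kg$ at $y$ to that of $\xi$ at $\Phi(y)$, so $\Phi^*(\omega^{A'})$ and $\omega^A$ automatically agree on vertical vectors, and as $\Phi_{*y}$ is an isomorphism they coincide exactly when their kernels do. On the other hand, by Proposition~\ref{EquivSec}(2) applied to $R$ with the connection $A\times A'$ --- whose horizontal space at $(y,y')$ is $\{(v,v')\in A_y\times A'_{y'}\mid p_*v=p'_*v'\}$ --- the section $S(\Phi)=s_{\sigma^\Phi}$ is $\Gamma^{A\times A'}$-parallel if and only if $d\sigma^\Phi$ vanishes on every $(A\times A')$-horizontal tangent vector. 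The bridge between these two conditions is the differentiation of the identity $\Phi\circ\mathrm{pr}_1=a\circ(\mathrm{pr}_2,\sigma^\Phi)$ on $R$, where $a:P'\times K\to P'$ is the right action and $\mathrm{pr}_1,\mathrm{pr}_2$ the projections of $R$: at $(y,y')\in R$, with $k:=\sigma^\Phi(y,y')$ and a tangent vector $(v,v')$, one obtains
$$\Phi_*(v)=w+Z,$$
where $w\in T_{\Phi(y)}P'$ is the image of $v'$ under the differential of right translation by $k$ (hence $A'$-horizontal whenever $v'$ is, by $K$-invariance of $A'$) and $Z$ is the fundamental vector field determined by $d\sigma^\Phi(v,v')$ (hence vertical, with $Z=0$ iff $d\sigma^\Phi(v,v')=0$).

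It then remains to combine these facts. If $\Phi^*(A')=A$ and $(v,v')$ is $(A\times A')$-horizontal, then $\Phi_*(v)\in A'_{\Phi(y)}$ and $w\in A'_{\Phi(y)}$, so $Z$ is simultaneously horizontal and vertical, whence $Z=0$ and $d\sigma^\Phi(v,v')=0$. Conversely, if $d\sigma^\Phi$ annihilates all $(A\times A')$-horizontal vectors, then given $v\in A_y$ I choose any $y'\in P'_{p(y)}$ and the $A'$-horizontal lift $v'$ of $p_*v$ at $y'$; the pair $(v,v')$ is $(A\times A')$-horizontal, so $d\sigma^\Phi(v,v')=0$ and therefore $\Phi_*(v)=w\in A'_{\Phi(y)}$, giving $\Phi_{*y}(A_y)\subseteq A'_{\Phi(y)}$ and, by equality of dimensions, $\Phi_{*y}(A_y)=A'_{\Phi(y)}$; this is $\Phi^*(A')=A$. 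The only non-formal ingredient is the differentiation of the action map $a$ and the recognition of $Z$ as a vertical vector vanishing precisely with $d\sigma^\Phi(v,v')$; everything else is a transcription of Proposition~\ref{EquivSec}.
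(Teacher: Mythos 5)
Your proposal is correct and follows essentially the same route as the paper: identify $\Gamma(M,I(P,P'))$ with $(K\times K)$-equivariant maps $P\times_M P'\to K$ via Proposition~\ref{EquivSec}, define $\sigma^\Phi$ by $\Phi(y)=y'\cdot\sigma^\Phi(y,y')$, and differentiate that identity to split $\Phi_*(v)$ into an $A'$-horizontal part coming from $v'$ and a vertical part controlled by $d\sigma^\Phi$, concluding in both directions by the fact that a vector both horizontal and vertical is zero. The only (harmless) additions are your explicit justification that $\Phi^*(A')=A$ is equivalent to matching horizontal distributions and the more detailed verification that $S_0$ is a bijection.
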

\begin{proof}
By Proposition \ref{EquivSec} the space of sections   $\Gamma(M,I(P,P'))$ is naturally identified with the space ${\cal C}^{K\times K}(P\times_M P',K)$  of  $(K\times K)$-equivariant maps   $P\times_M P'\to K$. Using this identification  we will define a natural bijection  
$$S:\Hom_{\id}(P,P')\to {\cal C}^{K\times K}(P\times_M P',K).$$
Let $\Phi \in\Hom_\id(P,P')$, and  $(y,y')\in P\times_M P'$. Since $\Phi(y)$ and $y'$ are in the same fiber, there exists a unique element $\sigma^\Phi(y,y')\in K$ such that $\Phi(y)=y'.\sigma^\Phi(y,y')$. It is easy to see that the map $P\times_M P'\ni (y,y')\mapsto \sigma^\Phi(y,y')$ is $(K\times K)$-equivariant, hence it gives an element $\sigma^\Phi\in {\cal C}^{K\times K}(P\times_M P',K)$. Therefore, by definition, we get the identity 
\begin{equation}\label{idt}
\Phi(y)=y'.\sigma^\Phi(y,y')	.
\end{equation}
Conversely, for an element $\sigma\in {\cal C}^{K\times K}(P\times_M P',K)$, it is easy to check that the right hand side of (\ref{idt}) depends only on $y$ and defines an $\id$-covering bundle morphism $P\to P'$. Our bijection $S$ is $\Phi\mapsto\sigma^\Phi$.\\

We prove now that, for any pair $(A,A')\in {\cal A}(P)\times {\cal A}(P')$, the conditions (i), (ii) are equivalent.

(2) Let $\Phi\in\Hom_\id(P,P')$.  Let $\lambda:P'\times K\to P'$ be  the right action of $K$ on $P'$. For a pair $(y',k)\in P'\times K$ denote by  $\lambda_{y'}:K\to P'$, $\lambda_k:P'\to P'$ the corresponding  maps obtained from $\lambda$ by fixing an argument. Using (\ref{idt}) we obtain for a pair  $(w,w')\in T(P\times_M P')$
\begin{equation}\label{Leibniz}
\Phi_{*y}(w)=(\lambda_{y'})_{*}\sigma^\Phi_{*}(w,w') +(\lambda_{\sigma^\Phi(y,y')})_{*}(w').	
\end{equation}

Let ${\cal H}\subset T_{P\times_M P'}$ be the $(A,A')$-horizontal distribution. Recall that one has  ${\cal H}_{(y,y')}=A_y\times_{T_xM} A'_y$.   Suppose that   $\resto{\sigma^\Phi_{*}}{\cal H}=0$, let $w\in A_y$ and let $w'$ be the $A'$-horizontal lift of $p_*(w)$.   Using (\ref{Leibniz}) we obtain
 $$\Phi_{*y}(w)=(\lambda_{\sigma^\Phi(y,y')})_{*}(w')\in A'_{y'.\sigma^\Phi(y,y')}=A'_{\Phi(y)}$$
 Therefore $\Phi_{*y}(A_y)=A'_{y'}$. Since this holds for any $(y,y')\in P\times_M P'$ we get $\Phi^*(A')=A$,   as claimed. Conversely, suppose $\Phi^*(A')=A$.  Then for any $(w,w')\in\cal H_{(y,y')}$ we have
  $$(\lambda_{y'})_{*}\sigma^\Phi_{*}(w,w') =-(\lambda_{\sigma^\Phi(y,y')})_{*}(w')+\Phi_{*y}(w)\in A'_{\Phi(y)},$$
  where the left hand side is vertical, and the right hand side is $A'$-horizontal. This implies $(\lambda_{y'})_{*}\sigma^\Phi_{*}(w,w')=0$, hence  $\sigma^\Phi_{*}(w,w')=0$.  
\end{proof}

\end{document}